\newcommand{\RR}{{\mathbb{R}}}
\newcommand{\0}{\mbox{\bf 0}}
\newcommand{\1}{\mbox{\bf 1}}
\newcommand{\x}{\mbox{\bf x}}
\renewcommand{\r}{\mbox{\bf r}}
\newcommand{\vb}{\mbox{\bf v}}
\newcommand{\al}{\mbox{\boldmath$\alpha$}}
\newcommand{\mub}{\mbox{\boldmath$\mu$}}
\newcommand{\e}{\mbox{\bf e}}
\newsavebox\myboxA
\newsavebox\myboxB
\newlength\mylenA
\newcommand*\xoverline[2][0.75]{%
    \sbox{\myboxA}{$\m@th#2$}%
    \setbox\myboxB\null
    \ht\myboxB=\ht\myboxA%
    \dp\myboxB=\dp\myboxA%
    \wd\myboxB=#1\wd\myboxA
    \sbox\myboxB{$\m@th\overline{\copy\myboxB}$}
    \setlength\mylenA{\the\wd\myboxA}
    \addtolength\mylenA{-\the\wd\myboxB}%
    \ifdim\wd\myboxB<\wd\myboxA%
       \rlap{\hskip 0.5\mylenA\usebox\myboxB}{\usebox\myboxA}%
    \else
        \hskip -0.5\mylenA\rlap{\usebox\myboxA}{\hskip 0.5\mylenA\usebox\myboxB}%
    \fi}
\DeclareMathOperator{\supp}{supp} 
\DeclareMathOperator{\argmin}{argmin} 
\DeclareMathOperator{\argmax}{argmax} 
\DeclareMathOperator{\diag}{Diag}
\DeclareMathOperator{\nnz}{nnz} 
\date{} 
\definecolor{brightpink}{rgb}{1.0, 0.0, 0.5}
\newcommand{\revise}[1]{{{\color{black} #1}}}
\newtheorem{theorem}{Theorem}[section]
\newtheorem{lemma}[theorem]{Lemma}
\newtheorem{remark}[theorem]{Remark}
\newtheorem{example}[theorem]{Example}
\begin{document}

\title{Assigning 
 Stationary Distributions to \\  
 Sparse Stochastic Matrices} 

\author{
Nicolas Gillis\thanks{Department of Mathematics and Operational Research, Universit\'e de Mons, 
Rue de Houdain 9, 7000 Mons, Belgium. 
Email: nicolas.gillis@umons.ac.be. NG acknowledges the support
 by the European Union (ERC consolidator, eLinoR, no 101085607).  
 } \and Paul Van Dooren\thanks{UCLouvain, Email: vandooren.p@gmail.com.}
 }

\maketitle

\begin{abstract}
The target stationary distribution problem (TSDP) is the following: given an irreducible stochastic matrix $G$ and a target stationary distribution $\hat \mub$, construct a minimum norm perturbation, $\Delta$, such that  $\hat G = G+\Delta$ is also stochastic and has the prescribed target stationary distribution, $\hat \mub$. 
In this paper, we revisit the TSDP under a constraint on the support of $\Delta$, that is, on the set of non-zero entries of $\Delta$. This is particularly meaningful in practice since one cannot typically modify all entries of $G$. 
We first show how to construct a feasible solution $\hat G$ that has essentially the same support as the matrix $G$. 
Then we show how to compute globally optimal and sparse solutions using the component-wise $\ell_1$ norm and linear optimization. We propose an efficient implementation that relies on a column-generation approach which allows us to solve sparse problems of size up to $10^5 \times 10^5$ in a few minutes.  
We illustrate the proposed algorithms with several numerical experiments.  

\smallskip

\noindent \textbf{Keywords}: {stochastic matrix, stationary distribution, support, sparsity, linear optimization, Metropolis-Hastings algorithm} 

\smallskip

\noindent  \textbf{AMS subject classifications}: 60J10, 93C73, 65F15, 90C05 

\end{abstract}

\section{Introduction} \label{sec:intro}

\revise{
In this paper, we consider discrete-time Markov chains with $n$ nodes. We denote the transition matrix of such a  chain as $G \in \mathbb{R}^{n \times n}_+$ where the entry of $G$ at position $(i,j)$, denoted $G_{i,j}$, is the probability to go to node $j$ when leaving node $i$. The matrix $G$ is stochastic, that is, its entries are nonnegative and the sum of the entries in each row is equal to one. 
The matrix $G$ is irreducible if it is not reducible, that is, if there does not exist two sets of indices, $\mathcal{I}$ and $\mathcal{J}$,  such that $\mathcal{I} \cup \mathcal{J} = \{1,2,\dots,n\}$, $\mathcal{I} \cap \mathcal{J} = \emptyset$ and  
 $G_{i,j} = 0$ for all $i \in \mathcal{I}$ and $j \in \mathcal{J}$. 
 Irreducibility of $G$ implies that there exists a unique positive vector $\mub > 0$ such that $\mub^\top G  = \mub^\top$ and $\mub^\top \1_n = 1$; this is a consequence of the Perron-Frobenius theorem; see~\cite{berman1994nonnegative} for more details. 
 This vector $\mub$ is called the stationary distribution of $G$, and accounts for the ``importance'' of each node, as the $i$th entry of $\mub$ is equal to the probability to be in node $i$ after having spent enough time in the chain.} 

 \revise{
Stochastic matrices $G$ are used to 
model hyperlink networks used in particular for the  PageRank computation~\cite{gleich2015pagerank}, social networks~\cite{wasserman1994social}, or queuing networks~\cite{bolch2006queueing}. In such Markov chains, the stationary distributions contain important information on the nodes in the network, such as their centrality or other types of rankings~\cite{avrachenkov2010quasi}. 
The target stationary distribution then captures some desired properties of the system. For example, consider a road network where each node is a road, and $G_{i,j}$ the probability that a car on
 road $i$ turns into road  $j$, while $G_{i,i}$ can be chosen to model the different travel times along
 the different roads~\cite{crisostomi2011google}. 
 The $i$th entry of the stationary distribution $\mub$  represents the long-range time average with which a typical car will be found to drive on road $i$. In other words, the vector $\mub$ represents the (relative) road congestion; see~\cite{BerkhoutHV23} for more details and other applications.  
}

In this context, the target stationary distribution problem (TSDP) was introduced recently  in~\cite{BerkhoutHV23}, and is defined as follows. We are given 
\begin{itemize} 
    \item $G$, an $n\times n$ irreducible stochastic matrix  with positive stationary distribution $\mub > 0$, that is, 
\begin{equation}  \label{stochastic} G \ge 0,   \quad G\1_n = \1_n, \quad \mub^\top G  = \mub^\top, \quad \mub^\top\1_n=1, 
\end{equation}
where $\1_n$ is the $n$-dimensional vector of all 1's. 

\item $\hat \mub > 0$, a positive target distribution such that $\hat\mub^\top \1_n=1$. 
\end{itemize} 
The TSDP requires to find a minimum norm correction, $\Delta$, such that $\hat G :=G+\Delta$ is still stochastic and has the target $\hat \mub$ as its stationary distribution. The set  of admissible candidate matrices, $\mathcal{D}$,  
is thus described by \revise{three} conditions: 
\begin{equation}   \label{Delta}
\mathcal{D}:= \left\{ \Delta \in \RR^{n\times n} \; \mid \; \hat\mub^\top (G+\Delta)  = \hat\mub^\top, \quad \Delta \1_n=\0_n, \quad G+\Delta \ge 0 \right\}, 
\end{equation} 
where $\0_n$ is the $n$-dimensional vector of all 0's; see~\cite{gregory1992row} for more details on this feasible set.  
In practice, one is interested in reaching the target distribution with minimum effort, that is, in finding minimum norm solutions $\Delta$ yielding a perturbed model $\hat G=G+\Delta$ with the prescribed distribution $\hat\mub$. 
The set $\mathcal{D}$ is convex and the TSDP requires to solve the following convex optimization problem
\begin{equation} \label{eq:TSDP}
\underset{\Delta\in \mathcal{D}}{\min}  \; \|\Delta\|, \tag{TSDP} 
\end{equation}
for a given norm $\| \cdot \|$. 
\revise{For example, in the road network application, one is interested to modify the network at minimum cost  (e.g., add a few road segments, and/or remove/add one-way streets) in order to reduce the congestion.}

 This line of research is quite different from the body
of literature on the sensitivity of the stationary distribution of a stochastic matrix $G$ with respect to a perturbation $\Delta$ that preserves the stochasticity of $G+\Delta$. This is a field of active research and various sensitivity bounds have been proposed in the literature \cite{Abbas} \cite{Caswell} \cite{ChoMeyer}, \cite{Ipsen} \cite{Kirkland} \cite{Meyer} \revise{\cite{mitrophanov2005sensitivity}} \cite{Seneta}. 
Such bounds are of interest in a wide range of application areas, such as mathematical physics, climate modeling, Bayesian statistics, and bio-informatics. But the aim of the present paper is to assign a target distribution, which is an inverse problem rather than a sensitivity problem. 

\revise{
To the best of our knowledge, there are only a few works that consider the TSDP or variants. An early work that considered a similar problem is~\cite{billera2001geometric}. 
They prove that the famous Metropolis-Hastings (M-H) algorithm~\cite{metropolis1953equation, hastings1970monte} (see~\eqref{eq:MHalso} below for the formula) finds the optimal solution of the TSDP where 
    the norm minimized is the weighted component-wise $\ell_1$ norm\footnote{
    In~\cite{billera2001geometric}, authors do not take into account the diagonal entries in the objective. However, by the stochastic constraint and the fact that the M-H solution satisfies $\hat G(i,j) \leq G(i,j)$ for all $i \neq j$, this is equivalent, since $\sum_{j \neq i} |G(i,j)-\hat G(i,j)| = \frac{1}{2} \|G(i,:)-\hat G(i,:)\|_1$ for all $i$. 
    }, 
    $\sum_{i=1}^n \hat \mu_i \|G(i,:)-\hat G(i,:)\|_1$,
and with the additional constraint that $\hat G$ is $\hat \mu$-reversible, that is, 
\[
\hat \mu_i \hat G(i,j) = 
\hat \mu_j \hat G(j,i), \quad \text{ for all  } i,j. 
\]
The M-H solution can be computed in $O(\nnz(G))$ operations, as follows: 
\begin{equation} \label{eq:MHalso}
\hat G(i,j) \; = \;  \min \left( G(i,j) , 
\frac{\hat \mu_j}{\hat \mu_i} G(j,i) 
\right) \quad \text{ for all  } i \neq j,  
\end{equation}
while $\hat G(i,i) = 1-\sum_{j \neq i} \hat G(i,j)$ for all $i$. 
Note that $\hat G(i,j) = 0$ whenever $G(i,j) = 0$ or $G(j,i) = 0$. This implies that the support of $G$ is contained in $\supp(I) \cup \left( \supp(G) \cap \supp(G^\top) \right)$. If the support of $G$ is not symmetric, that is, $\supp(G) \neq \supp(G^\top)$, then the M-H algorithm might fail to generate an irreducible chain $\hat G$; see, e.g., the discussion in~\cite{diaconis1995we}. For example, take the irreducible cycle  
\[
G = \left[
\begin{array}{ccc}
    0  &  1 & 0 \\
     0  &  0 & 1 \\
      1  &  0 & 0 \\
\end{array}
\right], 
\]
with $\mub = [1/3,1/3,1/3]$, and let $\hat \mub = [1/2,1/4,1/4]$. The solution to the M-H algorithm is the identity matrix, that is,  $\hat G = I_n$, which is not irreducible, while the optimal solution is given by 
\[
\Delta^* = \left[ 
\begin{array}{ccc}
    0.5  &  -0.5 & 0 \\
     0  &  0 & 0\\
      0  &  0 & 0 \\
\end{array}
\right], \text{ so that }  G + \Delta^* = \left[ 
\begin{array}{ccc}
    0.5  &  0.5 & 0 \\
     0  &  0 & 1 \\
      1  &  0 & 0 \\
\end{array}
\right]. 
\] 
Note that this solution will coincide with our closed-form solution proposed  in section~\ref{sec:convex}.  
}

Solving the TSDP for large-scale problems directly using commercial solvers might be intractable for large $n$, with $O(n^2)$ variables and constraints; see the discussion in section~\ref{sec:linopt}. 
This motivated the introduction of a heuristic approach to find an approximate minimizer using a set of rank-1 corrections~\cite{BerkhoutHV23}. 
An algorithm was given in~\cite{BerkhoutHV23} for constructing a minimum norm rank-1 solution when such a solution exists. However, for sparse matrices, there might not exist feasible rank-1 perturbations. Moreover, rank-one perturbations are typically dense, which is not desirable: in practice, it is typically not possible to modify most of the links in a network, but rather one would like to modify only a few. For example, in a road network~\cite{crisostomi2011google}, it is not possible to add links between distant roads, while one would like to reduce the congestion by modifying as few existing links as possible.  

In this paper, we therefore consider the TSDP with a support constraint while trying to promote sparse solutions. 
\revise{The support of a matrix  $\Delta \in \mathbb{R}^{n \times n}$ 
is defined as the set containing the index pairs where $\Delta$ is non-zero, that is, 
\[
\supp(\Delta) = \{ (i,j) \ | \ \Delta_{i,j} \neq 0 \}.  
\]  
The support constraint requires that the support of the perturbation, $\Delta$, 
is contained in a given set $\Omega \subseteq \{ (i,j) \ | \ 1 \leq i,j \leq n\}$.
}  
Finally, we focus in this paper on the following TSDP problem:  
\begin{equation} \label{eq:suppTSDP}
\underset{\Delta\in \mathcal{D}}{\min}  
\; \|\Delta\|_1 \quad \text{ such that } \quad \supp(\Delta) \subset \Omega, 
\end{equation}
where 
\begin{itemize}
    \item $\|\Delta\|_1 = \sum_{i,j} |\Delta_{i,j}|$ is the component-wise $\ell_1$ norm which promotes sparsity as it is the convex envelope (that is, the tightest convex lower bound) of the $\ell_0$ norm on the $\ell_\infty$ ball; see, e.g.,~\cite{bach2012optimization} and the references therein. In fact, we will prove in Theorem~\ref{lem:sparsitysol} that if \eqref{eq:suppTSDP} is feasible, there is an optimal solution with less than $\min(|\Omega|,\nnz(G)+2n)$ non-zero entries, \revise{where  $|\Omega|$ is the cardinality of $\Omega$ and $\nnz(G)$ is the number of non-zero entries in $G$.} 

\item $\Omega$ is a given \revise{set of index pairs} that correspond the links that can be modified in the Markov chain.  
\end{itemize} 
For example, if one can only modify existing links, we have $\Omega = \supp(G)$, which is referred to as a non-structural perturbation~\cite{breen2015stationary}.  

The above problem is related to the feasible set $\{ G  \ | \ G \geq 0, \; G \1_n = \1_n, \; \supp(G) \subset \Omega\}$ which was  studied in~\cite{breen2015stationary}, \cite{gregory1992row}, and where authors tried to describe all possible stationary vectors of irreducible matrices in this set, for a given $\Omega$. 
\revise{Note that researchers have tackled other optimization problems over Markov chains; for example, how to choose the non-zero entries of $G$ such that the modulus of the second largest eigenvalue is minimized. This is the so-called problem of finding the fastest mixing Markov chain on the graph induced by $G$~\cite{boyd2004fastest}. 
}   


\paragraph{Contribution and outline of the paper} 

In section~\ref{sec:convex}, we first show that the TSDP with support constraint~\eqref{eq:suppTSDP} always admits a feasible solution for $\Omega = \supp(G+I)$, and provide a feasible solution that is an optimal diagonal scaling of $I-G$.  
 We then discuss and analyze this feasible solution, 
 in particular its distance to optimality, when it is rank-one \revise{and optimal}, and how the ordering of $\hat \mub$ affects the solution. 
In section~\ref{sec:linopt}, we propose an efficient linear optimization formulation of~\eqref{eq:suppTSDP}, with $2n$ equalities and $O(|\Omega|)$ variables, that allows us to solve~\eqref{eq:suppTSDP} in $O(n^3)$ operations, in the worst case. When $\Omega$ is sparse, the run time is empirically observed to be comparable to  the time of constructing a sparse  $2n \times O(\nnz(G))$ matrix with 2 nonzeros per column, where $\nnz(G)$ is the number of non-zero entries in $G$, allowing us to solve sparse problems of size $10^4 \times 10^4$ within seconds, and of size $10^5 \times 10^5$ within minutes.   
When $\Omega$ is dense and $G$ is sparse, we propose a column-generation approach that allows us to solve problems of similar size  in a comparable computational time, by initializing the solution with   the feasible case $\Omega = \supp(G+I)$. 
We report numerical experiments in section~\ref{sec:Num}. 



\paragraph{Notation} We use uppercase for matrices, boldface lowercase for vectors and normal lowercase for scalars. 
We use $\0_n$ and $\1_n$ to denote the $n$-vectors of all 0's and 1's, respectively, and $\e_i$ to denote the $i$th basis vector in $\RR^n$. The $n\times n$ matrix of all 1's is denoted by $\1_{n,n}$. 
We drop the index $n$ when the dimension is clear from the context. 
\revise{
The entry of matrix $M$ at position $(i,j)$ is denoted  by $M_{i,j}$, and the $i$th entry of the vector $\vb$ by $v_i$. The $i$th row (resp.\ $j$th column) of $M$ is denoted by $M_{i,:}$ (resp.\ by $ M_{:,j}$).  
}
By $\supp(M)$, we mean the set of \revise{index pairs} for which the matrix $M$ is nonzero, and by $\nnz(M) = |\supp(M)|$ the number of nonzeros in $M$ where $| \cdot |$ denotes the cardinality of a set. 
By $\diag(\alpha_1,\ldots,\alpha_n)$, we mean the $n\times n$ diagonal matrix with the parameters $\alpha_i$ on its main diagonal, 
and  $( \cdot )^\top$ denotes the transpose. 
The inequalities $>$ and $\ge$ applied to vectors and matrices are elementwise inequalities. 
 We say that an $n$-by-$n$ matrix $M$ is stochastic if $M \geq 0$ and $M \1_n = \1_n$.

\section{TSDP with $\Omega = \supp(G+I_n)$} \label{sec:convex} 

In this section, we analyze a special case where we allow the non-zero entries of $G$ and entries on its diagonal to be modified, that is, $\Omega = \supp(G+I_n)$. 
We first show that the TSDP~\eqref{eq:suppTSDP} is  always feasible in this case (section~\ref{sec:closedform}). 
In fact, we provide an explicit closed-form solution. 
We then discuss properties of this solution: optimality (section~\ref{sec:optimality}), sparsity and rank-1 case (section~\ref{sec:rank1}), and how the ordering of $\mub$ compared to that of $\hat \mub$ affects the solution (section~\ref{sec:reorder}).

\subsection{Closed-form feasible solutions for $\Omega = \supp(G+I_n)$} \label{sec:closedform}

Let us consider the solutions to the TSDP of the form 
\[ 
\Delta(\al) = D(\al) (I_n-G), 
\]
where $D(\al):= \diag(\alpha_1,\ldots,\alpha_n)$ and $\0_n \le \al \le \1_n$ so that $\supp(\Delta) \subseteq \supp(G+I_n)$. This leads to the perturbed matrices 
\begin{equation}  \label{family}  
G(\al) := G + \Delta(\al) = (I_n- D(\al))G + \revise{D(\al)}.  
\end{equation}

\revise{The $i$th row of $G(\al)$ is a convex combination of the $i$th row of $G$ with weight $(1-\alpha_i)$ and  the $i$th row of the identity matrix with weight $\alpha_i$. This means that the larger $\alpha_i$, the more important is the self-loop of node $i$, and hence a larger increase in the corresponding entry of the stationary vector is expected. 
This is a standard construction in Markov chains. 
As we will see below (Theorem~\ref{th:IminusGsolution}), this construction allows us to obtain any stationary distribution.} 

The following lemma formalizes the fact that $G(\al)$ is stochastic, and irreducible if $\al < \1_n$. 
\begin{lemma} \label{lem:Ga}
Let $G$ be an irreducible stochastic matrix.
Then the family of matrices 
\[
{\cal G}_{\al}:=\{G(\al) \ | \ \0_n \le \al \le \1_n \} 
\] is a closed convex set of stochastic matrices, and the subset ${\cal G}_{\al<\1_n}:=\{G(\al) \ | \ \0_n \le \al < \1_n\}$ 
\revise{is the set of} the irreducible stochastic matrices \revise{within} ${\cal G}_{\al}$. 
\end{lemma}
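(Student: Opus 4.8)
The plan is to verify the three assertions in turn: convexity, closedness, and the characterization of irreducibility.

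First, convexity and closedness of $\mathcal{G}_{\al}$. The map $\al \mapsto G(\al)$ is affine in $\al$ by the formula $G(\al) = (I_n - D(\al))G + D(\al)$, so $\mathcal{G}_{\al}$ is the image of the box $[\0_n,\1_n]$ under an affine map. An affine image of a compact convex set is compact and convex, which gives both closedness and convexity at once; alternatively, one checks directly that $\tfrac12(G(\al)+G(\al')) = G(\tfrac12(\al+\al'))$ and that a limit $\lim_k G(\al^{(k)})$ with $\al^{(k)}\in[\0,\1]$ equals $G(\al^*)$ for $\al^* = \lim_k \al^{(k)} \in [\0,\1]$ (using that each diagonal entry is recoverable, e.g.\ $\alpha_i = G(\al)_{i,i} - \big((I-D(\al))G\big)_{i,i}$, or more simply from $D(\al)(I-G)\1 = \0$ combined with reading off entries). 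Then I would note each $G(\al)$ is stochastic: nonnegativity holds because $G(\al)_{i,:} = (1-\alpha_i)G_{i,:} + \alpha_i \e_i^\top$ is a convex combination of nonnegative vectors when $0\le\alpha_i\le 1$, and $G(\al)\1_n = (I_n-D(\al))G\1_n + D(\al)\1_n = (I_n - D(\al))\1_n + D(\al)\1_n = \1_n$.

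Second, the irreducibility characterization. I would argue both inclusions. For $\al < \1_n$: the off-diagonal zero pattern of $G(\al)$ equals that of $(I_n-D(\al))G$, and since $1-\alpha_i > 0$ for every $i$, we have $G(\al)_{i,j} = (1-\alpha_i)G_{i,j}$ for $i\ne j$, so $\supp\big(G(\al)\big) \supseteq \supp(G) \setminus \{(i,i)\}$, i.e.\ the directed graph of $G(\al)$ contains all the non-self-loop edges of the directed graph of $G$. Since $G$ is irreducible, its graph is strongly connected (ignoring self-loops is irrelevant to strong connectivity on $n\ge 2$ nodes, and the $n=1$ case is trivial), hence so is the graph of $G(\al)$, so $G(\al)$ is irreducible. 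Conversely, if some $\alpha_i = 1$, then row $i$ of $G(\al)$ equals $\e_i^\top$, so node $i$ has no outgoing edge other than its self-loop; taking $\mathcal{I} = \{i\}$ and $\mathcal{J} = \{1,\dots,n\}\setminus\{i\}$ shows $G(\al)_{i,j}=0$ for all $i\in\mathcal I, j\in\mathcal J$, so $G(\al)$ is reducible (here $n\ge 2$; if $n=1$ the statement is vacuous or $\al<\1$ is forced). This shows $\mathcal{G}_{\al<\1_n}$ is exactly the set of irreducible members of $\mathcal{G}_{\al}$.

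I don't anticipate a genuine obstacle here; the lemma is essentially bookkeeping on the structure $G(\al) = (I-D(\al))G + D(\al)$. The one point requiring a little care is the closedness claim, specifically making sure that a convergent sequence in $\mathcal{G}_{\al}$ has its limit of the same parametric form with parameter still in $[\0_n,\1_n]$ — this is where I'd want to be explicit that the box is compact and the parametrization recovers $\al$ from $G(\al)$, rather than hand-wave it. The irreducibility direction is the most interesting but still routine once one recalls that irreducibility of a nonnegative matrix is equivalent to strong connectivity of its associated digraph and that self-loops play no role in strong connectivity.
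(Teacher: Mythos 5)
Your proof is correct and follows essentially the same route as the paper's: rows of $G(\al)$ are convex combinations of the rows of $G$ and $I$ (giving stochasticity and convexity/closedness, which you justify more carefully via the affine image of the compact box), the off-diagonal zero pattern is preserved when $\al<\1_n$ (giving irreducibility), and $\alpha_i=1$ forces row $i$ to equal $\e_i^\top$ (giving reducibility). The extra detail you supply on closedness and on strong connectivity is sound but not a different argument.
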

\begin{proof}
Each row of the matrix $G(\al)$ is a convex combination of that of $G$ and $I$, and $G(\al)$ is a closed convex set of stochastic matrices. 
When constraining the parameters to $\0_n \le \al < \1_n$, the matrices $G(\al)$ and $G$ (which is actually equal to $G(\0_n)$) have the same off-diagonal zero pattern, and hence $G(\al)$ is irreducible if and only if $G$ is irreducible (see, e.g., \cite[Chap.~8]{BoydV04}). Finally, in order to show that ${\cal G}_{\al<\1_n}$ contains {\em all} irreducible matrices of ${\cal G}_{\al}$, it suffices to see that if any \revise{$\alpha_i=1$} then $\e_i^\top G(\al)=\e_i^\top$, which implies that $G(\al)$ is reducible.
\end{proof}

The solutions $\Delta(\al)$ lead to a set of stochastic matrices, ${\cal G}_{\al}$. 
It remains to choose the free parameters $\al$  to assign $\hat \mub$ as the stationary distribution of an irreducible stochastic matrix $G(\al)$ in ${\cal G}_{\al<\1_n}$, that is, choose $\al$ such that $\hat \mub^\top G(\al)  = \hat \mub^\top$.

The following theorem shows that the set ${\cal G}_{\al<\1_n}$ always contains feasible solutions for arbitrary $G$ and $\hat\mub$, and provides the minimum norm solution among these feasible solutions.


\begin{theorem} \label{th:IminusGsolution} 
\revise{Let $G$ be an irreducible stochastic matrix with stationary distribution $\mub>0$}, 
and let $\hat \mub>0$, $\hat\mub^\top\1_n=1$, be a
given target stationary distribution. 
Then the set of parameters $\al$ so that the 
matrices $G(\al) \in {\cal G}_{\al<\1_n}$ have the target $\hat \mub$
as stationary distribution, that is, $G(\al)^\top \hat \mub = \hat \mub$, is given by 
$$ 
\al(c)= \1_n - c (\mub./\hat\mub), \quad 0 < c \le 1/\|\mub./\hat\mub\|_\infty, 
$$
where $\mub./\hat\mub$ denotes the element-wise division\footnote{This is the MATLAB notation.} of the vectors $\mub$ and $\hat\mub$.
The parameter vector $\al^* = \al(c^*)$ of minimum norm 
is obtained for $c^*=1/\|\mub./\hat\mub\|_\infty$,  
and it also minimizes the norm of the perturbation matrix $\Delta(\al^*) = D(\al^*)(I_n-G)$, for any \revise{monotone} norm $\| \cdot \|$\revise{, that is, any norm satisfying the property that $|A| \geq |B| \Rightarrow \|A\| \geq \|B\|$ for any matrix $A$ and $B$.}  
\end{theorem}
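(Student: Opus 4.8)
The plan is to reduce the stationarity condition $G(\al)^\top \hat\mub = \hat\mub$ to a left‑eigenvector equation for $G$ and then invoke the Perron–Frobenius uniqueness of the stationary vector. Using $G(\al) = (I_n - D(\al))G + D(\al)$ from \eqref{family}, the condition $G(\al)^\top \hat\mub = \hat\mub$ rearranges to $G^\top\big((I_n - D(\al))\hat\mub\big) = (I_n - D(\al))\hat\mub$. Setting $\vb := (I_n - D(\al))\hat\mub$, this says $\vb^\top G = \vb^\top$, i.e.\ $\vb$ lies in the left null space of $I_n - G$, which is one‑dimensional and spanned by $\mub$ because $G$ is irreducible. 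Hence $\vb = c\,\mub$ for some scalar $c$, which componentwise reads $(1-\alpha_i)\hat\mu_i = c\,\mu_i$, i.e.\ $\al = \1_n - c(\mub./\hat\mub)$; conversely, any $\al$ of this form makes $(I_n - D(\al))\hat\mub = c\mub$ a left eigenvector and so satisfies stationarity. Note $c$ is uniquely recovered from $\al$ via any coordinate, so this is a genuine characterization (bijection $c \leftrightarrow \al(c)$), not merely a sufficient condition.

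Next I would determine for which $c$ one has $\0_n \le \al(c) < \1_n$, which is exactly the requirement $G(\al(c)) \in {\cal G}_{\al<\1_n}$. Since $\mub > 0$ and $\hat\mub > 0$, the vector $\mub./\hat\mub$ is strictly positive, so $\al(c) < \1_n$ holds iff $c > 0$, and $\al(c) \ge \0_n$ holds iff $c\,\mu_i/\hat\mu_i \le 1$ for every $i$, i.e.\ iff $c \le 1/\|\mub./\hat\mub\|_\infty$. This yields the stated nonempty interval $0 < c \le 1/\|\mub./\hat\mub\|_\infty$, which in particular proves feasibility, and by Lemma~\ref{lem:Ga} each associated $G(\al(c))$ is stochastic and irreducible.

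For the two optimality claims I would argue by entrywise monotonicity in $c$. For $0 < c \le c' \le c^*$ we have $\0_n \le \al(c') \le \al(c)$ componentwise, hence $|\al(c')| \le |\al(c)|$; and since the $(i,j)$ entry of $\Delta(\al(c)) = D(\al(c))(I_n - G)$ equals $\alpha_i(c)(I_n - G)_{i,j}$ with $\alpha_i(c) \ge 0$ nonincreasing in $c$, we also get $|\Delta(\al(c'))| \le |\Delta(\al(c))|$ componentwise. Invoking the monotonicity property of $\|\cdot\|$ (that $|A| \ge |B|$ implies $\|A\| \ge \|B\|$), both $\|\al(c)\|$ and $\|\Delta(\al(c))\|$ are nonincreasing in $c$ over the feasible interval, hence are minimized at the right endpoint $c^* = 1/\|\mub./\hat\mub\|_\infty$. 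Finally, $c^* > 0$ guarantees $\al^* = \al(c^*) < \1_n$, so the minimizing matrix $\Delta(\al^*) = D(\al^*)(I_n - G)$ indeed comes from an irreducible $G(\al^*)$.

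I expect the only real subtlety to be the first step: making the equivalence with the eigenvector equation exact in both directions, so that the parameter set is characterized rather than merely contained, and checking that Perron–Frobenius applies precisely because $G$ is irreducible (this is what forces $\vb$ to be a scalar multiple of $\mub$). The rest is routine — an interval computation for the box constraints $\0_n \le \al < \1_n$, and a componentwise monotonicity argument that plugs directly into the definition of a monotone norm.
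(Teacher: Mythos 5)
Your proposal is correct and follows essentially the same route as the paper's proof: reduce stationarity to the statement that $(I_n-D(\al))\hat\mub$ lies in the one-dimensional left null space of $I_n-G$ (forced by irreducibility), read off $\al(c)=\1_n-c(\mub./\hat\mub)$ componentwise, obtain the interval for $c$ from the box constraints $\0_n\le\al<\1_n$, and conclude both optimality claims by entrywise monotonicity in $c$ together with the definition of a monotone norm. Your explicit remark that $c$ is recovered from $\al$, so the parametrization is a genuine bijection, is a small but welcome sharpening of the paper's argument.
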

\begin{proof}
If the vector $\hat \mub$ is the stationary distribution of $G(\al)$, it must satisfy $\hat \mub^\top(I_n-G(\al))=0$.
Since $G(\al)= (I_n-D(\al))G+D(\al) I_n$ for some $\al$, $\hat\mub$ must satisfy
$$ \hat \mub^\top(I_n-D(\al))(I_n-G)=0.$$
But since $G$ is irreducible, the rank of $(I_n-G)$ is $n-1$ and \revise{any} left null vector must be proportional to $\mub^\top$. Therefore, all candidate solutions for $\al$ must satisfy, for some non-zero scalar $c$~:
$$  \hat \mub^\top(I_n-D(\al)) =  c\mub^\top, \quad  \0_n \le \al < \1_n,
$$ 
which can be rewritten as a system of linear equations and inequalities~:
$$   \hat\mu_i(1-\alpha_i) = c \mu_i,  \quad 0 \le \alpha_i < 1, \quad i=1,\ldots,n, 
$$
in the unknown variables $c$ and $\alpha_i, \; i=1,...,n$.
Using the $\circ$-notation for the elementwise product of vectors, this is equivalent to 
$$    \hat\mub\circ (\1_n -\al)  = c \mub, \quad \0_n<(\1_n-\al) \le \1_n.
$$
Since $\mub$ and $\hat\mub$ are positive, and the vector $(\1_n-\al)$ is constrained to be positive, $c$ must be positive as well.
The upper bound $(\1_n-\al)\leq \1_n$ implies that $c$ must be smaller than  or equal to $c^*:=1/\|\mub./\hat\mub\|_\infty$. 
The set of solutions is then the line segment
$$
\al(c) = \1_n -c \mub./\hat\mub, \quad 0< c \le c^*:=1/\|\mub./\hat\mub\|_\infty, 
$$
and the parameter vector $\al(c)$  of minimum norm 
is obtained for $c=c^*$
since all entries of $\al(c)$ decrease linearly with increasing $c$ in the interval $0 < c \le c^*$. The perturbation matrix
$\Delta(\al) = D(\al)(I_n-G)$ is linear in $\al$, and \revise{the absolute values of all entries} decrease with increasing $c$, and hence \revise{any monotone norm} also decreases with increasing $c$, reaching its minimum for $c=c^*$. 
\end{proof}

The following example illustrates the method.

\begin{example} \label{ex:example2}
Consider the stochastic matrix $G$ of a ring network, \revise{in which each node connects to two other nodes, forming a single continuous pathway through each node,}  with stationary distribution~$\mub$:
$$ G=\left[\begin{array}{cccc}  1/2 & 1/4 & 0 & 1/4 \\ 1/4 & 1/2 & 1/4  & 0 \\
 0 & 1/4 & 1/2 & 1/4 \\ 1/4 & 0 & 1/4  & 1/2 \end{array}\right], 
\quad \mub^\top=\left[1/4,1/4,1/4,1/4\right].
$$
Let the target distribution be the vector $\hat\mub^\top=\left[\frac18,\frac18,\frac14,\frac12\right]$. We have $\mub^\top./\hat \mub^\top = \left[2,2,1,\frac12\right]$, with optimal parameter $c^*=\frac12$, so that $\al^* = \al(c^*)=\left[0, 0, \frac12,\frac34\right]^\top$, leading to 
$$ \Delta(\al^*) = \left[\begin{array}{cccc} 
0 & 0 & 0 & 0 \\  
0 & 0 & 0 & 0 \\ 
0 & -1/8 & 1/4 & -1/8 \\
-3/16 & 0 & -3/16 & 3/8 \end{array}\right]  \; 
 \mathrm{and} \; \ 
 G(\al^*)= \left[\begin{array}{cccc} 
 1/2       &     1/4      &      0 &             1/4  \\   
       1/4          &  1/2    &        1/4   &         0    \\    
       0          &    1/8       &     5/8   &         1/4  \\    
       1/16       &    0          &    1/8   &        13/16 \end{array}\right].
$$ 
\revise{
Since $G$ is symmetric, the M-H algorithm, see \eqref{eq:MHalso},  will generate a feasible solution, namely 
$$ \Delta = \left[\begin{array}{cccc} 
0 & 0 & 0 & 0 \\  
0 & 0 & 0 & 0 \\ 
      0        &    -1/8      &      1/8     &       0      \\  
      -3/16      &     0     &        -1/8    &       5/16 
\end{array}
\right]  
\; 
 \mathrm{and} 
 \; \ 
 \hat G
 = \left[\begin{array}{cccc} 1/2 & 1/4 & 0 & 1/4 \\ 1/4 & 1/2 & 1/4  & 0  \\
0 & 1/8 & 3/4 & 1/8 \\ 1/16 & 0 & 1/16 & 7/8 \end{array}\right], 
$$ 
with $\|\Delta\|_1 = 7/8 < \|\Delta(\al^*)\|_1 = 10/8$. 
Note that an optimal solution with error 6/8, computed using linear optimization (see section~\ref{sec:linopt}), is 
$$ \Delta = \left[\begin{array}{cccc} 
0 & 0 & 0 & 0 \\  
0 & 0 & 0 & 0 \\ 
       0       &      -1/8  &          0         &     1/8 \\    
      -3/16      &     0         &    -1/16    &       1/4 
\end{array}
\right]  
\; 
 \mathrm{and} 
 \; \ 
 \hat G
 = \left[\begin{array}{cccc} 
1/2     &     1/4   &        0       &       1/4   \\   
       1/4    &        1/2    &        1/4   &         0   \\     
       0      &        1/8     &       1/2    &        3/8  \\    
       1/16   &        0       &       3/16   &        3/4 
\end{array}\right].
$$      
} 
\end{example}

In the following sections, we discuss several properties of the solutions $G(\al)$. Before doing so, let us introduce some additional notation. 
For given stationary distributions
$\mub$ and $\hat\mub$, we define the ratio $\r:=\mub./\hat\mub$ whose entries belong to the interval $0 < r_*\le r_i \le r^*$, where
\begin{equation} \label{eq:r*} r_*:=\min_i \mu_i/\hat\mu_i >0, \quad \text{ and } \quad r^*:=\max_i \mu_i/\hat\mu_i >0.
\end{equation} 
Note that the maximal admissible value for $c$ is $c^* = 1/r^*$. The elements of $\al(c^*)$ are bounded by 
$$  0 \le   \alpha_i = 1- r_i/r^* \le  1-r_*/r^*=\frac{r^*-r_*}{r^*}, 
$$ 
and
\begin{equation} \label{eq:boundDal}
|\Delta(\al)| = D(\al)|I-G|\le (1- r_*/r^* )|I-G| =\frac{(r^*-r_*)}{r^*}|I-G|.
\end{equation}
Hence $\|\al(c^*)\|$ and $\|\Delta(\al^*)\|$ \revise{for mononote norms $\|.\|$ are expected to} decrease with a decreasing gap $r^*-r_*$, and are equal to zero \revise{if and only if $\mub=\hat\mub$, that is,  $r^*=r_*$}.

\subsection{How good is $\Delta(\al^*)$ ?}   \label{sec:optimality}

In this subsection we compare the optimal solution in $\mathcal{G}_{\al}$ with the lower bound for the optimal 
solution in the larger set $\mathcal{D}$ given in \eqref{Delta}. The following lemma gives such a lower bound.
\begin{lemma} \label{lem:bound}
\revise{Any $\Delta$ such that $\hat \mub^\top (G+\Delta) = \hat \mub^\top$, in particular any feasible solution $\Delta \in \mathcal{D}$ of the TSDP,
satisfies} 
$$ 
\revise{
\| \Delta \| \ge 
\frac{\| \hat\mub^\top(I-G)\|}{\|\hat\mub\|} =  
\frac{\| (\hat\mub-\mub)^\top(I-G)\|}{\|\hat\mub\|},
} 
$$ 
for any induced norm\footnote{\revise{Given a vector norm $\|\cdot\|$, the matrix induced norm  is given by $\|A\| = \sup_{\mathbf z \neq 0} \frac{\|A \mathbf z\|}{\|\mathbf z\|}$.}}, and this also holds for the component-wise $\ell_1$ norm. 
\end{lemma}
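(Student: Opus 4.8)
The plan is to start from the defining constraint $\hat\mub^\top(G+\Delta) = \hat\mub^\top$, which can be rewritten as $\hat\mub^\top \Delta = \hat\mub^\top(I-G)$. This is the only hypothesis we are allowed to use, so the whole proof must extract a norm lower bound purely from this single matrix equation. The idea is to take norms on both sides: $\|\hat\mub^\top(I-G)\| = \|\hat\mub^\top \Delta\|$, and then bound the right-hand side above using submultiplicativity of the induced norm, $\|\hat\mub^\top \Delta\| \le \|\hat\mub^\top\|\,\|\Delta\| = \|\hat\mub\|\,\|\Delta\|$ (here $\|\hat\mub^\top\|$ as a $1\times n$ matrix induced norm equals the vector norm $\|\hat\mub\|$, or its dual, depending on convention — I will need to be careful and just use the consistent/compatibility inequality $\|\mathbf y^\top A\| \le \|\mathbf y\|\,\|A\|$, which holds for any induced norm). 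Rearranging gives $\|\Delta\| \ge \|\hat\mub^\top(I-G)\| / \|\hat\mub\|$, which is the claimed bound.

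For the second equality in the statement, I would simply note that $\mub^\top(I-G) = \mub^\top - \mub^\top G = \0^\top$ since $\mub$ is the stationary distribution of $G$. Hence $\hat\mub^\top(I-G) = (\hat\mub-\mub)^\top(I-G) + \mub^\top(I-G) = (\hat\mub-\mub)^\top(I-G)$, so the two numerators coincide. This is a one-line observation and requires no new input.

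For the component-wise $\ell_1$ norm claim, the argument is slightly different because $\|\Delta\|_1 = \sum_{i,j}|\Delta_{i,j}|$ is not an induced norm. Here I would use the compatibility inequality $\|\mathbf y^\top \Delta\|_\infty \le \|\mathbf y\|_\infty \|\Delta\|_1$? — no, I need to match the stated bound, which has $\|\hat\mub^\top(I-G)\|$ in the numerator and $\|\hat\mub\|$ in the denominator with the \emph{same} norm on both. For the component-wise $\ell_1$ norm on matrices, the natural companion vector norm making things compatible is $\|\mathbf y^\top \Delta\|_\infty \le \|\mathbf y\|_\infty \|\Delta\|_1$; equivalently $|(\hat\mub^\top \Delta)_j| = |\sum_i \hat\mu_i \Delta_{i,j}| \le \|\hat\mub\|_\infty \sum_i |\Delta_{i,j}| \le \|\hat\mub\|_\infty \|\Delta\|_1$, and taking the max over $j$ gives $\|\hat\mub^\top\Delta\|_\infty \le \|\hat\mub\|_\infty\|\Delta\|_1$. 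So for the $\ell_1$ case the bound reads $\|\Delta\|_1 \ge \|\hat\mub^\top(I-G)\|_\infty/\|\hat\mub\|_\infty$; I would state the $\ell_1$ version with the $\infty$-norm in numerator and denominator and point out this is what "also holds for the component-wise $\ell_1$ norm" means. (One could alternatively be even cruder: $\|\hat\mub^\top\Delta\|_1 \le \|\hat\mub\|_\infty \cdot n \cdot \|\Delta\|_1$? — the clean statement is with $\infty$-norms, matching the way induced norms pair a vector norm with itself.)

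The main obstacle is bookkeeping about which vector norm pairs with which matrix norm: the statement is phrased as if the same symbol $\|\cdot\|$ appears in numerator, denominator, and on $\Delta$, which is literally correct only for induced norms where $\|\mathbf y^\top\| = \|\mathbf y\|_D$ (dual norm) — so strictly the cleanest route is to invoke the standard fact that for an induced norm $\|\mathbf y^\top A\| \le \|\mathbf y\|\|A\|$ holds, which is all that is needed, and separately handle the $\ell_1$ case by the elementary elementwise estimate above. I would present the induced-norm case in two lines, then the $\ell_1$ case in two more lines, then the equality of numerators in one line, and stop; there is no deeper difficulty here.
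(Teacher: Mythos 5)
Your argument for the induced-norm case and for the equality of the two numerators is exactly the paper's proof: take norms in $\hat\mub^\top \Delta = \hat\mub^\top(I-G)$, apply the compatibility inequality $\|\x^\top A\| \le \|\x^\top\|\,\|A\|$, and use $\mub^\top(I-G)=\0^\top$.

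The one place you deviate is the component-wise $\ell_1$ case, and there you end up proving a different (also valid) inequality rather than the one stated. The lemma asserts the bound with the \emph{same} norm throughout, i.e.\ $\|\Delta\|_1 \ge \|\hat\mub^\top(I-G)\|_1 / \|\hat\mub\|_1$ with the entrywise $\ell_1$ norm on the matrix and the vector $\ell_1$ norm on the row vectors; you instead retreat to the $\infty$--$\infty$ pairing. The retreat is unnecessary: the $\ell_1$--$\ell_1$ compatibility inequality does hold for the entrywise norm, since
\[
\|\x^\top A\|_1 \;=\; \sum_j \Big|\sum_i x_i A_{i,j}\Big| \;\le\; \sum_i |x_i| \sum_j |A_{i,j}| \;\le\; \Big(\sum_i |x_i|\Big)\Big(\sum_{i,j}|A_{i,j}|\Big) \;=\; \|\x\|_1\,\|A\|_1,
\]
which is precisely the fact the paper invokes ("and also holds for the component-wise $\ell_1$ norm"). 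With this, your two-line scheme goes through verbatim for the $\ell_1$ case as well (and since $\hat\mub>0$ with $\hat\mub^\top\1_n=1$, the denominator is just $1$). So the proposal is essentially correct and on the paper's route, but you should replace the $\infty$-norm workaround with the display above to prove the statement actually claimed.
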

\begin{proof} 
The inequality $\| x^\top \|\cdot \| A \|\ge \|x^\top A \|$ for the vector-matrix product $x^\top A$ holds for any induced norm, and also holds for the component-wise $\ell_1$ norm.  
Applying this to $\hat\mub^\top \Delta = \hat \mub^\top(I-G)$ then yields the required inequality for 
$\| \Delta \|$. The equality then follows trivially from $\mub^\top(I-G)=0$. 
\end{proof}
For the perturbation $\Delta(\al) = D(\al)(I_n - G)$,  
the bound 
$\| \Delta(\al^*) \| \le (1-r_*)/r^*\| I_n - G \| $ was obtained earlier; see~\eqref{eq:boundDal}. 
Putting this bound together with Lemma~\ref{lem:bound}, we obtain the following interval
\begin{equation} \label{eq:boundoptDelta}
\| (\hat\mub-\mub)^\top(I_n-G)\| /\|\hat\mub^\top\| 
\; \le \;  \| \Delta(\al^*) \| \; \le \; (1-r_*)/r^* \| I_n - G \|, 
\end{equation} 
for the optimal $\Delta(\al^*)$ in $\mathcal{G}_{\al}$.
This interval goes to zero when $\mathbf{d}:=\hat \mub - \mub$ goes to zero, which indicates that the optimal solution constrained to $\mathcal{G}_{\al}$ is approaching a globally optimal solution. This will be illustrated on numerical examples in section~\ref{sec:Num}. 
In the next section, we 
focus on the case of a rank-1 perturbation.

\subsection{Support of $\Delta(\al^*)$ 
and optimal rank-1 solutions}    \label{sec:rank1}

Using Theorem~\ref{th:IminusGsolution}, it is straightforward to describe the support of the feasible solutions of the TSDP of the form $\Delta(\al) = D(\al)(I_n - G)$. 

\begin{lemma} \label{lem:sparsityDal}
\revise{Let $G$ be an irreducible stochastic matrix with stationary distribution $\mub>0$}, and let us use the same notation as in Theorem~\ref{th:IminusGsolution}. 
For all $\al = \al(c)$ such that $c < c^*$, we have 
$\supp(\Delta(\al)) = \supp(G+I_n)$. 
For $\al^* = \al(c^*)$, \revise{the rows of $\Delta(\al)$ with index in $\left\{ i \ | \ \frac{\mu_i}{\hat \mu_i} = r^* = \frac{1}{c^*} \right\}$ 
are all-zeros, the other rows 
have the same support as that of $\supp(G+I_n)$ (as when $c < c^*$). 
In other terms, for $\al^* = \al(c^*)$,}   
\[
\supp\big( \revise{\Delta(\al^*)_{i,:}}\big)
= \left\{ 
\begin{array}{cc}
 \supp( (G+I_n)_{i,:})   & \text{for } \frac{\mu_i}{\hat \mu_i} < r^* = \frac{1}{c^*},   \\
   \emptyset  & \text{for } \frac{\mu_i}{\hat \mu_i} = r^* = \frac{1}{c^*}. 
\end{array}
\right.
\]
\end{lemma}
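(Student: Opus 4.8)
The plan is to exploit the fact that $\Delta(\al)=D(\al)(I_n-G)$ acts on rows independently: the $i$th row of $\Delta(\al)$ is $\Delta(\al)_{i,:}=\alpha_i\,(I_n-G)_{i,:}$, a scalar multiple of the $i$th row of $I_n-G$. Since scaling a vector by a nonzero scalar leaves its support unchanged, we get $\supp(\Delta(\al)_{i,:})=\supp\big((I_n-G)_{i,:}\big)$ whenever $\alpha_i\neq 0$, and $\supp(\Delta(\al)_{i,:})=\emptyset$ whenever $\alpha_i=0$. Thus the whole argument reduces to two elementary sub-claims: (i) identifying $\supp\big((I_n-G)_{i,:}\big)$ with $\supp\big((G+I_n)_{i,:}\big)$, and (ii) determining exactly for which $i$ the coefficient $\alpha_i=\alpha_i(c)$ vanishes, as a function of $c\in(0,c^*]$.

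For (i), the off-diagonal entries in row $i$ of $I_n-G$ and of $G+I_n$ are $-G_{i,j}$ and $G_{i,j}$, respectively, so these two rows have the same support among the indices $j\neq i$. For the diagonal position $i$: since $G$ is irreducible and $n\ge 2$, we have $G_{i,i}<1$ (otherwise $\e_i^\top G=\e_i^\top$ and $G$ would be reducible, as already observed in the proof of Lemma~\ref{lem:Ga}), hence $(I_n-G)_{i,i}=1-G_{i,i}\neq 0$ and $(G+I_n)_{i,i}=1+G_{i,i}\neq 0$. Therefore $\supp\big((I_n-G)_{i,:}\big)=\{i\}\cup\{\,j\neq i : G_{i,j}\neq 0\,\}=\supp\big((G+I_n)_{i,:}\big)$. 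The degenerate case $n=1$ forces $\mub=\hat\mub$ and $\Delta(\al^*)=0$, so the statement holds trivially there.

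For (ii), recall from Theorem~\ref{th:IminusGsolution} that $\alpha_i(c)=1-c\,\mu_i/\hat\mu_i=1-c\,r_i$ with $0<c\le c^*=1/r^*$ and $0<r_i\le r^*$. If $c<c^*$, then $c\,r_i<c^*r_i=r_i/r^*\le 1$, hence $\alpha_i(c)>0$ for every $i$; combined with sub-claim (i) this gives $\supp(\Delta(\al(c)))=\bigcup_i \supp\big((G+I_n)_{i,:}\big)=\supp(G+I_n)$. If $c=c^*$, then $\alpha_i(c^*)=1-r_i/r^*$, which equals $0$ exactly when $r_i=r^*$ and is strictly positive when $r_i<r^*$; substituting into the row-wise description above yields the claimed dichotomy for $\supp\big(\Delta(\al^*)_{i,:}\big)$.

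The only step needing genuine care is sub-claim (i), and within it the non-vanishing of the diagonal entry of $I_n-G$: this is precisely where irreducibility of $G$ is used. Everything else is direct substitution into the closed form of Theorem~\ref{th:IminusGsolution} together with the bookkeeping of which rows carry a zero coefficient $\alpha_i$.
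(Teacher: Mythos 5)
Your proof is correct and follows essentially the same route as the paper: write $\Delta(\al)_{i,:}=\alpha_i\,(I_n-G)_{i,:}$, use irreducibility to see that this row is nonzero (with the support of $(G+I_n)_{i,:}$) whenever $\alpha_i\neq 0$, and read off from $\alpha_i(c)=1-c\,\mu_i/\hat\mu_i$ exactly when $\alpha_i$ vanishes. Your explicit check that $\supp\big((I_n-G)_{i,:}\big)=\supp\big((G+I_n)_{i,:}\big)$, including the diagonal entry via $G_{i,i}<1$, is a level of care the paper leaves implicit but is the same argument.
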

\begin{proof} 
 We have   $\Delta(\al)_{i,:} = \alpha_i (\revise{\e_i^\top} - G_{i,:})$. Since $G$ is irreducible $G_{i,:} \neq \e_i^\top$, and hence the supports of the $i$th rows of $\Delta(\al)$ and $I-G$ coincide unless $\alpha_i = 0$. 
 For $\al = \al(c)$, this  happens only when $c = c^*$ and $\frac{\mu_i}{\hat \mu_i} = r^* = \frac{1}{c^*}$. 
\end{proof}



Lemma~\ref{lem:sparsityDal} implies that the  solution $\Delta(\al^*)$ will have the same support as $G+I_n$, except for the rows that correspond to the maximum entries of $\mub./\hat \mub$ which will be equal to zero.  

\paragraph{Rank-1 case} 
Lemma~\ref{lem:sparsityDal} also implies that  $\Delta(\al^*)$  has rank-1 
if the vector $\mub./\hat\mub$ has only two different values, namely
$r_*$ and $r^*$, and if, moreover, $r^*$ is repeated $n-1$ times, in which case $\Delta(\al^*)$ has a single non-zero row with index $j$ such that
\begin{equation} \label{eq:mujmui} \mu_j=r_*\hat\mu_j \quad \text{ and } \quad \mu_i=r^*\hat\mu_i \text{ for all } i\neq j. 
\end{equation}
Since $\mub^\top\1_n=\hat\mub^\top\1_n=1$, $ r^* + \hat\mu_j(r_*-r^*)=1$, we have $\hat \mu_j=(r^*-1)/(r^*-r_*)<1$.
Finally, we obtain
\begin{equation} \label{eq:formrank1}
r_*<1<r^*, \quad \alpha_j=1-\frac{r_*}{r^*}, \quad \alpha_i=0 \text{ for } i\neq j, \quad \Delta =\alpha_j\e_j\e_j^\top(I_n-G) .
\end{equation}
Note that $\e_j\e_j^\top(I_n-G)$ is the matrix whose $j$th row is \revise{$(I_n-G)_{j,:}$} and all the other rows are equal to zero.

\paragraph{What $\hat \mub$ can we choose to make $\Delta(\al^*)$ rank-1?}  We can take 
$\hat \mub 
= \frac{\mub + \lambda \e_j}{(\mub + \lambda \e_j)^\top \1_n} 
= \frac{1}{1+\lambda}(\mub + \lambda \e_j)$  
for any $j$ and $\lambda > 0$. 
When $\lambda \rightarrow \infty$, the state $j$ tends to become absorbing, that is, $\Delta_{j,:} \rightarrow \e_j^\top\revise{ - G_{j,:}}$, since $\hat \mub_j \rightarrow 1$. Intuitively, we increase the probability of the state to return to itself, making it absorbing at the limit. 
In other words, by modifying \revise{the entries} in one row of $G$, increasing the weight to oneself while reducing proportionally the weights to the others, we increase $\hat \mub_j$, while decreasing $\hat \mub_i$ for $i \neq j$ with the same ratio, $\frac{1}{1+\lambda} = \frac{1}{r^*}$.

\begin{example} \label{ex:example27}
Consider Example~\ref{ex:example2}: 
$$
G=\left[\begin{array}{cccc}  1/2 & 1/4 & 0 & 1/4 \\ 1/4 & 1/2 & 1/4  & 0 \\
 0 & 1/4 & 1/2 & 1/4 \\ 1/4 & 0 & 1/4  & 1/2 \end{array}\right], 
\quad \mub^\top=\left[1/4,1/4,1/4,1/4\right]. 
$$
Choosing  $\hat\mub^\top=\left[\frac25,\frac15,\frac15,\frac15\right]$. We have $\mub^\top./\hat \mub^\top = \left[\frac58,\frac54,\frac54,\frac54\right]$, with optimal parameter $c^*=\frac45$, 
so that $\al^* = \al(c^*)=\left[\frac12, 0, 0, 0\right]^\top$, leading to 
$$ \Delta(\al^*) = 
\left[\begin{array}{cccc} 
1/4 & -1/8 & 0 & -1/8 \\ 
0 & 0 & 0 & 0 \\ 
0 &  0& 0 & 0 \\
0 & 0 & 0 & 0 \end{array}\right]  \; 
 \mathrm{and} \; \ 
 G(\al^*)= \left[\begin{array}{cccc} 
3/4 & 1/8 & 0 & 1/8 \\  
1/4 & 1/2 & 1/4  & 0 \\
 0 & 1/4 & 1/2 & 1/4 \\ 
 1/4 & 0 & 1/4  & 1/2 
\end{array}\right].
$$
This turns out to be a globally optimal solution of the TSDP that minimizes the component-wise $\ell_1$, as proved in the next paragraph. 
\end{example}

\revise{
\begin{remark}
    The subdominant (that is, second largest in modulus) eigenvalue of $G$ is $0.5$, while that of $G(\al^*)$ is $0.65$, indicating an increased mixing time of the associated Markov chain. 
Intuitively, any perturbation causing a substantial increase in the
diagonal entries will slow down time to convergence since a random walk will spend on average more time in each node, hence taking more time to visit all nodes.  
An interesting direction of further research is to look for perturbation that achieve a target distribution while controlling the subdominant eigenvalue. For example, instead of minimizing the norm of the 
 perturbation, one could try to minimize the modulus of the subdominant eigenvalue, or combine these two objectives. 
\end{remark}
}

\paragraph{Optimality of the rank-1 solution $\Delta(\al^*)$} 

As observed in Example~\ref{ex:example27}, it turns out the solution $\Delta(\al^*)$ is optimal for the TSDP in the rank-1 case, under the condition that $j \in \argmax_k \hat \mu_k$, as stated in the following theorem. 

\begin{theorem} \label{lem:optimalrankone}
\revise{Let $G$ be an irreducible stochastic matrix with stationary distribution $\mub>0$}, and let us use the same notation as in Theorem~\ref{th:IminusGsolution}. 
Assume $\hat \mub = \frac{1}{1+\lambda} (\mub + \lambda \e_j)$ for some $\lambda > 0$ and some $j$. 
Assume also that $\lambda \geq \mu_i - \mu_j$ for all $i \neq j$, that is, $j \in \argmax_k \hat \mu_k$. 
Then $\Delta(\al^*)$ where $\al^* = \al(c^*)$ is an optimal solution of the TSDP {for the component-wise and matrix $\ell_1$ norms}. 
\end{theorem}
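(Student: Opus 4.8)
The plan is to prove the stronger claim that $\Delta(\al^*)$ is optimal among \emph{all} matrices $\Delta$ with $\hat\mub^\top(G+\Delta)=\hat\mub^\top$ — hence over $\mathcal D$, and regardless of any support constraint — by showing that the lower bound behind Lemma~\ref{lem:bound} is attained at $\Delta(\al^*)$ precisely when $j\in\argmax_k\hat\mu_k$. The first step is to evaluate the right-hand side of the identity $\hat\mub^\top\Delta=\hat\mub^\top(I_n-G)$. Setting $\mathbf{d}:=\hat\mub-\mub=\tfrac{\lambda}{1+\lambda}(\e_j-\mub)$ and using $\mub^\top(I_n-G)=\0_n^\top$, this collapses to $\hat\mub^\top(I_n-G)=\tfrac{\lambda}{1+\lambda}\,(I_n-G)_{j,:}$. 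Since the $j$-th row of $I_n-G$ has nonnegative diagonal entry $1-G_{j,j}$ and nonpositive off-diagonal entries summing to $-(1-G_{j,j})$, we get $\|\hat\mub^\top(I_n-G)\|_1=\tfrac{2\lambda}{1+\lambda}(1-G_{j,j})$ and $\|\hat\mub^\top(I_n-G)\|_\infty=\tfrac{\lambda}{1+\lambda}(1-G_{j,j})$.

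Next I would lower-bound $\|\Delta\|$. For any vector $x$ and matrix $M$ one has, by a one-line estimate, $\|x^\top M\|_1\le\|x\|_\infty\|M\|_1$ for the component-wise $\ell_1$ norm, and $\|x^\top M\|_\infty\le\|x\|_\infty\|M\|_{\ell_1}$ for the operator $\ell_1$ norm (maximal absolute column sum); the latter is the instance of Lemma~\ref{lem:bound} for the induced $\ell_1$ norm. Applied with $x=\hat\mub$, $M=\Delta$ and the identity of Step~1, they give $\|\Delta\|_1\ge\|\hat\mub^\top(I_n-G)\|_1/\|\hat\mub\|_\infty$ and $\|\Delta\|_{\ell_1}\ge\|\hat\mub^\top(I_n-G)\|_\infty/\|\hat\mub\|_\infty$. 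This is where the hypothesis enters: $\lambda\ge\mu_i-\mu_j$ for all $i\neq j$ is exactly $\hat\mu_j=\max_k\hat\mu_k$, so $\|\hat\mub\|_\infty=\hat\mu_j=\tfrac{\mu_j+\lambda}{1+\lambda}$. Finally, by Theorem~\ref{th:IminusGsolution} and \eqref{eq:formrank1}, $c^*=1/(1+\lambda)$, $\alpha_j^*=1-r_*/r^*=\tfrac{\lambda}{\mu_j+\lambda}$ and $\alpha_i^*=0$ for $i\neq j$, so $\Delta(\al^*)=\alpha_j^*\,\e_j\e_j^\top(I_n-G)$ and hence $\|\Delta(\al^*)\|_1=2\alpha_j^*(1-G_{j,j})=\tfrac{2\lambda(1-G_{j,j})}{\mu_j+\lambda}$ and $\|\Delta(\al^*)\|_{\ell_1}=\alpha_j^*(1-G_{j,j})=\tfrac{\lambda(1-G_{j,j})}{\mu_j+\lambda}$. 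These match the lower bounds exactly, and since $\Delta(\al^*)$ is feasible (Theorem~\ref{th:IminusGsolution}; note $\al^*<\1_n$ because $\mu_j>0$), it is optimal in both norms.

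The argument is a few lines of algebra, so there is no real obstacle; the one point to watch is that in Step~2 the quantity dividing $\|\hat\mub^\top(I_n-G)\|$ must be $\|\hat\mub\|_\infty$ (not, e.g., $\|\hat\mub\|_1=1$, which would give a strictly weaker bound), and it is exactly the assumption $j\in\argmax_k\hat\mu_k$ that forces $\|\hat\mub\|_\infty$ to equal the particular entry $\hat\mu_j$ appearing in $\|\Delta(\al^*)\|$. Without that assumption the denominator would be the strictly larger $\max_k\hat\mu_k$ and the bound would fall short of $\|\Delta(\al^*)\|$; this is why the rank-one solution, although still feasible for any such $\hat\mub$, is only guaranteed optimal under the ordering hypothesis.
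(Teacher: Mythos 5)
Your proof is correct, but it takes a genuinely different route from the paper's. The paper argues via the rank-one parametrization of~\cite[Theorem~5.1]{BerkhoutHV23}: every solution of $\hat\mub^\top\Delta=\hat\mub^\top(I-G)$ of rank one has the form $\Delta(\x)=\frac{\x\hat\mub^\top}{\hat\mub^\top\x}(I-G)$, there always exists an \emph{optimal} rank-one solution to the problem without nonnegativity constraints (a fact that needs a separate argument, Appendix~\ref{app:proofL1}, for the component-wise $\ell_1$ norm), the minimization over $\x$ with $\|\x\|_1=1$ yields $\x^*=\e_i$ for $i\in\argmax_k\hat\mu_k$, and the hypothesis $\lambda\ge\mu_i-\mu_j$ is what makes $i=j$ and hence $G+\Delta(\x^*)\ge 0$. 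You instead bypass the rank-one machinery entirely and exhibit matching lower and upper bounds: the constraint forces $\hat\mub^\top\Delta=\frac{\lambda}{1+\lambda}(I-G)_{j,:}$, the estimates $\|x^\top M\|_1\le\|x\|_\infty\|M\|_1$ and $\|x^\top M\|_\infty\le\|x\|_\infty\|M\|_{\ell_1}$ give $\|\Delta\|\ge\|\hat\mub^\top(I-G)\|/\|\hat\mub\|_\infty$, and $\Delta(\al^*)=\frac{\lambda}{\mu_j+\lambda}\e_j\e_j^\top(I-G)$ attains this exactly when $\hat\mu_j=\|\hat\mub\|_\infty$. All your computations check out ($\|(I-G)_{j,:}\|_1=2(1-G_{j,j})$, $\max_k|(I-G)_{j,k}|=1-G_{j,j}$, $\alpha_j^*=\lambda/(\mu_j+\lambda)$). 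Two remarks. First, your component-wise $\ell_1$ bound with denominator $\|\hat\mub\|_\infty$ is genuinely \emph{sharper} than Lemma~\ref{lem:bound} as stated (which would only give denominator $\|\hat\mub\|_1=1$); it is essentially the content of the column-separability argument in Appendix~\ref{app:proofL1}, repackaged as an inequality, and your proof would not close without it, so it deserves its one-line verification. Second, your approach buys self-containedness (no appeal to the existence of optimal rank-one solutions) and makes transparent why the ordering hypothesis is needed, while the paper's approach additionally identifies what the unconstrained optimizer looks like when the hypothesis fails, which it uses in the subsequent example.
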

\begin{proof} 
By~\cite[Theorem~5.1]{BerkhoutHV23}, any feasible rank-one solution of the TSDP without the nonnegativity constraints  has the form 
\[
\Delta(\mathbf x) = \frac{\mathbf x \hat \mub^\top}{\hat \mub^\top \mathbf x} (I-G) \text{ for } \mathbf x \in \mathbb{R}^n, \;
{\mathbf x}^\top \hat \mub \neq 0. 
\] 
In fact, if $\Delta = \mathbf x \mathbf y^\top$ for some vectors $\mathbf x$ and $\mathbf y$,  the constraints $\hat \mub^\top \Delta = 
\hat \mub^\top (I-G)$ leads to 
\[
\hat \mub^\top \mathbf x \mathbf y^\top = 
\hat \mub^\top (I-G) \neq 0 \quad \Rightarrow \quad 
\mathbf y^\top = \frac{\hat \mub^\top}{\hat \mub^\top \mathbf x} (I-G), \text{ where } \hat \mub^\top \mathbf x \neq 0. 
\]
Moreover, \cite[Theorem~5.1]{BerkhoutHV23} proved that there always exists an optimal rank-one  solution to the TSDP without the nonnegativity constraints \revise{for any induced norm} \revise{; this follows from the fact that there exists a rank-one solution achieving the lower bound given in Lemma~\ref{lem:bound} for any $\Delta$ satisfying the constraint $\hat \mub^\top (G + \Delta) = \hat \mub^\top$ (see below for the formula of the rank-one solution for the case of the induced $\ell_1$ norm).} \revise{For the component-wise $\ell_1$ norm, the proof can be found in Appendix~\ref{app:proofL1}.} 

Hence solving $\min_{\mathbf x} \|\Delta(\mathbf x)\|$ solves the TSDP under the rank-one constraint (that is, $\text{rank}(\Delta) = 1$), with the constraint $\Delta \1_n = \1_n$ because $\Delta(\mathbf x)$ satisfies it automatically: $\Delta(\mathbf x) \1_n = \mathbf x \mathbf y^\top \1_n = 0$, but possibly without the nonnegativity constraints. 

Let us develop the above formula when $\hat \mub = \frac{1}{1+\lambda} ( \mub + \lambda e_j)$:  
\[ 
\Delta(\mathbf x) = \frac{\mathbf x (\mub + \lambda e_j)^\top}{(\mub + \lambda e_j)^\top \mathbf x} (I-G)
= 
\lambda 
\frac{\mathbf x   \e_j^\top}{ \revise{\mub}^\top \mathbf x + \lambda x_j} (I-G)
= 
\frac{\lambda}{\mub^\top \x + \lambda x_j}
 \mathbf x  (I-G)_{j,:}, 
\] 
which leads to 
\[
\lambda
\left\|
\frac{\mathbf x   \e_j^\top}{ \mu^\top \mathbf x + \lambda x_j} (I-G)
\right\| 
= 
\frac{\lambda}{|\mub^\top \mathbf x + \lambda x_j|}
\left\| \mathbf x  (I-G)_{j,:}
\right\|. 
\]
Let us start with the matrix $\ell_1$ norm, that is, $\|A\|_{\ell_1} = \max_{\mathbf z} \frac{\|A \mathbf z\|_1}{\|\mathbf z\|_1}$. We have 
\[
\left\| \mathbf x  (I-G)_{j,:}
\right\|_{\ell_1} 
= 
\max_{\mathbf z} \frac{\|\mathbf x  (I-G)_{j,:} \mathbf z\|_1}{\|\mathbf z\|_1}
= 
\|\mathbf x\|_1 \max_{\mathbf z} \frac{\|\mathbf  (I-G)_{j,:} \mathbf z\|_1}{\|\mathbf z\|_1}
= \|\mathbf x\|_1 \|(I-G)_{j,:} \|_\infty. 
\]
For the component-wise $\ell_1$ norm, we have 
\[
\left\| \mathbf x  (I-G)_{j,:}
\right\|_1 
= \|\mathbf x\|_1 \|(I-G)_{j,:} \|_1. 
\] 
Since $\Delta(\mathbf x)$ is independent of the scaling of $\mathbf x$, we can assume w.l.o.g.\ that $\|\mathbf x\|_1 = 1$. Hence  $\min_{\mathbf x} \|\Delta(\mathbf x)\|_*$ where $* \in \{1, \ell_1\}$ is equivalent to solving 
\[
\min_{\mathbf x, \|\mathbf x\|_1 = 1} 
\frac{1}{|\mub^\top \mathbf x+\lambda x_j|} = 
\max_{\mathbf x, \|\mathbf x\|_1 = 1} |\mub^\top \mathbf x+\lambda x_j|, 
\]
which is minimized at $\x^* = \e_i$ for 
$i \in  
\argmax_k \hat \mu_k$. 
(Note that this was proved in \cite[Corollary~5.3]{BerkhoutHV23} for the matrix $\ell_1$ norm.)   
This gives 
\[
\Delta(\x^*) 
= \Delta(\e_i) 
= \frac{\lambda}{\mu_i + \lambda (\e_i)_j}
 \e_i  (I-G)_{j,:} , 
\] 
meaning that the $i$th row of $G$ corresponding to the largest entry in $\hat \mub$ is perturbed in the direction $(I-G)_{j,:}$. Intuitively, the most influential node in $G$ will increase its weight towards the $j$th node while reducing weights to others, proportionally to the $j$th node weights. This makes sense since we are trying to increase the value of $\mu_j$ to $\hat \mu_j$: we use the most influential node to do that.  

Finally, this solution is nonnegative if $\supp (I-G)_{j,:} \subset \supp (I-G)_{i,:}$ which will typically not be the case when $G$ is sparse and $i \neq j$. 
However, if $\lambda \geq \mu_i - \mu_j$ then $\hat \mu_j$ is the largest entry in $\hat \mub$ so that $\x^* = \e_j$ and hence 
\[
\Delta(\mathbf x^*) 
= \frac{1}{\mu_j + \lambda} \e_j (\mub + \lambda \e_j)^\top (I-G)
= \frac{\lambda}{\mu_j + \lambda} \e_j \e_j^\top (I-G). 
\] 
This implies that $G + \Delta (\mathbf x^*) \geq 0$ which is therefore an optimal solution of the TSDP for the component-wise and matrix $\ell_1$ norms\footnote{This case is analyzed in~\cite[Section~7]{BerkhoutHV23}. However, authors do no mention the condition $\lambda \geq \max_i \mu_i - \mu_j$ to obtain~\cite[Equation~(7.2)]{BerkhoutHV23}.}.  
Recall that $\Delta(\al^*) = \alpha_j \e_j \e_j^\top (I-G)$, see~\eqref{eq:formrank1}, and $\alpha_j$ was minimized to obtain the best possible solution of the TSDP of this form. Since the above rank-one solution is optimal, this means that $\Delta(\mathbf x^*)  = \Delta(\al^*)$. In fact, noting that  
$r^* = \frac{1}{1+\lambda}$ and 
$r_* = \frac{\mu_j}{\hat \mu_j} 
= \frac{\mu_j}{\hat \mu_j} 
= \frac{(1+\lambda)\mu_j}{\mu_j + \lambda}$, 
we have 
\[
\alpha_j 
= 
1 - \frac{r_*}{r^*} 
= 
1 - \frac{\frac{(1+\lambda) \mu_j}{\mu_j + \lambda}}{1+\lambda} 
=  
\frac{\lambda}{\mu_j + \lambda}, 
\] 
which concludes the proof. 
\end{proof} 

\begin{example}
Let us take the final matrix from Example~\ref{ex:example27}: 
\[ 
G = 
\left[\begin{array}{cccc} 
3/4 & 1/8 & 0 & 1/8 \\  
1/4 & 1/2 & 1/4  & 0 \\
 0 & 1/4 & 1/2 & 1/4 \\ 
 1/4 & 0 & 1/4  & 1/2 
\end{array}\right] 
\quad \text{ with } \quad
\mub = \left[\frac25,\frac15,\frac15,\frac15\right].
\] 
Let $\hat \mub = \frac{1}{1+\lambda} (\mub + \lambda \e_2) = \left[\frac{4}{11},\frac{3}{11},\frac{2}{11},\frac{2}{11}\right]$ for $\lambda = \frac{1}{10}$, so that $\hat \mu_1 >   \hat \mu_2$. We have 
\[
\Delta(\mathbf x^*) = 
\left[\begin{array}{cccc} 
 -11/160   &      11/80      &   -11/160 & 0 \\  
0 & 0 & 0  & 0 \\
 0 & 0 & 0 & 0 \\ 
 0 & 0 & 0  & 0 
\end{array}\right]  
\neq 
\Delta(\al^*) = 
\left[\begin{array}{cccc} 
  0  &      0      &   0 & 0 \\  
 -1/12       &    1/6     &      -1/12 & 0 \\
 0 & 0 & 0 & 0 \\ 
 0 & 0 & 0  & 0 
\end{array}\right], 
\]
while 
\[ 
\Delta^* = \argmin_{\Delta \in \mathcal{D}} \|\Delta\|_1 = 
\left[\begin{array}{cccc} 
 -1/16    &       1/16        &   0 & 0 \\  
 0       &    1/12     &     -1/12 & 0 \\
 0 & 0 & 0 & 0 \\ 
 0 & 0 & 0  & 0 
\end{array}\right], 
\]
  where  $\| \Delta(\mathbf x^*) \|_1 = \frac{11}{40} < \| \Delta^* \|_1 =  \frac{7}{24} < \| \Delta(\mathbf x^*) \|_1 = \frac{1}{3}$, but \revise{$\Delta(\mathbf x^*)$} is not admissible as $G+\Delta(\mathbf x^*) \ngeq 0$. Note that $\Delta^*$ was computed using linear optimization; see section~\ref{sec:linopt}. 
\end{example}


\subsection{Ordering of $\mub$}  \label{sec:reorder}

\revise{
In this section we show how to reduce the difference  $r^*-r_*$  by introducing a permutation on the vector
$\hat \mub$, and hence the norm of $\Delta(\al^*)$ is expected to be smaller; see Equation~\eqref{eq:boundDal}.  
In other words, since $\hat \mub$ is a parameter to be chosen by the user, it is interesting to know in which situation $\Delta(\al^*)$ is expected to have a smaller norm to achieve the target $\hat \mub$. 
To do so, we derive a result on the orderings of the stationary distributions $\mub$ and $\hat\mub$ that  minimizes $r^*-r_*$. } 
\begin{theorem} \label{th:ordering}
Let $P$ and $\hat P$ be $n\times n$ permutation matrices that {\em reorder} the elements of 
$\mub$ and $\hat \mub$, respectively, meaning that the elements of the permuted vectors $P\mub$ and $\hat P\hat\mub$, each form a non-decreasing series. Then
$$  [\min_i (P\mu)_i/ (\hat P\hat\mu)_i , \max_i (P\mu)_i/(\hat P\hat\mu)_i ] \subset [ \min_i \mu_i/\hat\mu_i , \max_i \mu_i/\hat\mu_i ]. 
$$ 
\end{theorem}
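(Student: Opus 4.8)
```latex
\textbf{Proof proposal.}
The statement compares the spread of the ratio vector $\r = \mub./\hat\mub$ when the two vectors are matched ``as given'' versus when both are sorted into non-decreasing order. The natural approach is a classical \emph{rearrangement / exchange argument}. Write $\mub$ and $\hat\mub$ with a common ordering convention: since both $P$ and $\hat P$ only reorder entries, and the ratios $\mu_i/\hat\mu_i$ are invariant under a \emph{simultaneous} permutation of the indices, I may assume without loss of generality that $\hat P$ is already the identity, i.e. $\hat\mub$ is sorted non-decreasingly, and I must show that among all permutations $\sigma$ of the entries of $\mub$, the one that sorts $\mub$ non-decreasingly minimizes $\max_i \mu_{\sigma(i)}/\hat\mu_i$ and maximizes $\min_i \mu_{\sigma(i)}/\hat\mu_i$ — equivalently, that the sorted pairing gives an interval contained in the interval obtained from the ``identity'' pairing that was used to define $r_*, r^*$ in~\eqref{eq:r*}. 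Actually, since the identity pairing in~\eqref{eq:r*} is just \emph{one} particular permutation, it suffices to prove the stronger claim that the co-sorted pairing is simultaneously $\arg\min$ of the max-ratio and $\arg\max$ of the min-ratio over \emph{all} pairings.

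First I would handle the $\max$. Suppose $\sigma$ is a pairing that is not co-sorted, so there exist indices $k < \ell$ (recall $\hat\mu_k \le \hat\mu_\ell$) with $\mu_{\sigma(k)} > \mu_{\sigma(\ell)}$; I perform the transposition that swaps the values assigned to positions $k$ and $\ell$. The claim is that this swap does not increase $\max_i$ of the ratio: the only two ratios that change are $r_k = \mu_{\sigma(k)}/\hat\mu_k$ and $r_\ell = \mu_{\sigma(\ell)}/\hat\mu_\ell$, which become $r_k' = \mu_{\sigma(\ell)}/\hat\mu_k$ and $r_\ell' = \mu_{\sigma(k)}/\hat\mu_\ell$. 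Since $\hat\mu_k \le \hat\mu_\ell$ and $\mu_{\sigma(\ell)} < \mu_{\sigma(k)}$, one checks $r_k' \le r_k$ (smaller numerator, same denominator) and $r_\ell' \le \mu_{\sigma(k)}/\hat\mu_k = r_k$ (the new ratio $r_\ell'$ has numerator $\mu_{\sigma(k)}$ and a denominator $\hat\mu_\ell \ge \hat\mu_k$, so it is at most $r_k$, the \emph{old} max among these two). Hence $\max(r_k', r_\ell') \le \max(r_k, r_\ell)$, so the global max does not increase. Finitely many such adjacent-inversion-removing swaps (a bubble-sort argument) bring $\sigma$ to the co-sorted pairing without ever increasing the max-ratio. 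The $\min$ case is symmetric: the same swap, under the same hypotheses, satisfies $r_k' \ge \mu_{\sigma(\ell)}/\hat\mu_\ell = r_\ell$ and $r_\ell' \ge r_\ell$, so $\min(r_k', r_\ell') \ge \min(r_k, r_\ell)$, and the min-ratio is non-decreasing along the sorting process. Combining, the co-sorted pairing yields $\max$-ratio no larger and $\min$-ratio no smaller than any other pairing, in particular than the identity pairing of~\eqref{eq:r*}; this is exactly the asserted interval inclusion.

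The main obstacle — really the only delicate point — is bookkeeping the effect of a single transposition on the extremal ratios and making sure the inequalities go the right way when the denominators $\hat\mu_k, \hat\mu_\ell$ can be equal; one must be careful that ``swap to remove an inversion in $\mub$'' is well-defined and that the argument still works when there are ties in either $\mub$ or $\hat\mub$ (it does: ties just mean the swap is harmless, changing nothing). A secondary subtlety is the reduction ``assume $\hat P = I$'': I should state explicitly that a simultaneous relabeling of indices leaves both the set of achievable ratios and the min/max unchanged, so that the two-permutation statement reduces to the one-permutation (co-sorting) statement. Everything else is the routine finite induction on the number of inversions.
```
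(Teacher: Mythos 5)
Your proof is correct and follows essentially the same route as the paper: the paper also reduces to the case where $\hat \mub$ is sorted and factors the sorting permutation of $\mub$ into inversion-removing transpositions (Lemma~\ref{lem:decrease}), and your two-ratio swap computation is exactly the paper's two-by-two comparison in Lemma~\ref{lem:ab}. If anything, you are more explicit than the paper about justifying the simultaneous-relabeling reduction and the handling of ties.
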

\begin{proof} See Appendix~\ref{sec:appendixA}. 
\end{proof}
This theorem shows that when the vectors $P\mub$ and $\hat P\hat\mub$ have ``coherent" orderings, then the corresponding 
interval $[r_*(P\mub,\hat P\hat\mub),r^*(P\mub,\hat P\hat\mub)]$ is minimal, and hence the norm $\|\al(c^*)\|$ \revise{is expected to be smaller}. 
Note that the permutation of $\mub$ can be achieved by a permutation on the rows of the matrix $G$, but this is {\em not} a similarity transformation. 


\revise{In summary}, our result indicates that
the required perturbation $\Delta$ \revise{is expected to be smaller} when $\hat\mub$ is coherently ordered with $\mub$. 




\section{Formulation with linear optimization}  \label{sec:linopt} 

In the previous section, we focused on solutions of the TSDP of the form $\Delta(\al) = \diag(\al)(I_n - G)$ with support in $\supp(G+I_n)$. 
In this section, we propose an efficient linear optimization formulation of the TSDP with support constraint and for the component-wise $\ell_1$ norm. 
This allows us to solve large problems efficiently, especially when $\Omega$ or $G$ are sparse. 

\subsection{Linear optimization formulation} \label{sec:linoptformu}

The TSDP with support constraint $\supp(\Delta)\subset \Omega$ 
and for the component-wise $\ell_1$ norm can be formulated as follows: 
\begin{align} 
\min_{\Delta \in \mathbb{R}^{n \times n}}  \| \Delta \|_1  
\quad \text{ such that } \quad & \Delta \1_n = \0_n,  \nonumber \\ 
& \hat \mub^\top \Delta = \hat \mub^\top (I-G), \label{eq:sparseTSDP} \\ 
& \Delta + G \geq 0, \nonumber \\ 
& \Delta_{i,j} = 0 \text{ for } (i,j) \notin \Omega.  \nonumber 
\end{align} 
We will refer to this problem as $\ell_1$-TSDP with support constraint. 
Since the entries of $\Delta$ are non-zero only in the  set $\Omega$, \eqref{eq:sparseTSDP} has only $|\Omega|$ variables (the zero entries of $\Delta$ are not variables).  
In the dense case, that is, when $\Omega = \{(i,j) \ | \ 1 \leq i, j \leq n\}$, a naive way to formulate~\eqref{eq:sparseTSDP} is to introduce $n^2$ variables, say $Z \in \mathbb{R}^{n \times n}_+$, one for each entry of $\Delta$, impose $Z \geq \Delta$ and $Z \geq -\Delta$ and minimize $\sum_{i,j} Z_{i,j}$; this is the standard way to linearize the $\ell_1$ norm.  However, on top of introducing $n^2$ variables, it introduces $2n^2$ inequalities which is highly ineffective, since solving linear optimization problems requires, roughly speaking, a cubic number of operations in the number of variables or in the number of constraints\footnote{More precisely, it requires $\tilde{O}(n^\omega \log(n/\delta))$ time where $O(n^\omega)$ is the time to multiply two $n$-by-$n$ matrices, 
$\delta$ is the relative accuracy, and $\tilde O$ ignores polylog$(n)$ factors.} (solving the dual); see~\cite{van2020deterministic} and the references therein.  
For example, the problem was formulated in this way in~\cite[Equation (8.1)]{BerkhoutHV23}, and the formulation of the authors using a commercial solver, Gurobi, could not solve problems larger than $n=500$ within 10 minutes. 
The reason is that \cite{BerkhoutHV23} did not consider the component-wise $\ell_1$ norm, but matrix norms for which it is not possible to avoid the introduction of $O(|\Omega|)$ inequalities to model the problem via linear optimization. 


 Let us reformulate the problem in a more cost-effective way. 
First, it is interesting to note that whenever $G_{i,j} = 0$, we have $\Delta_{i,j} \geq 0$ because of the constraints $G + \Delta \geq 0$, 
and hence there is no need to introduce an additional variable for $\Delta_{i,j}$ when $G_{i,j} = 0$ to minimize $|\Delta_{i,j}|$. 
Let us denote the support of $G$ as $\supp(G)$, its complement as $\xoverline[0.9]{\supp(G)}$  (that is, the set of \revise{index pairs} corresponding to zero entries of $G$), and let us decompose $\Delta$ using three nonnegative terms: 
$\Delta = \Delta^0 + \big( \Delta^+ - \Delta^- \big)$, where 
$$
\supp(\Delta^0)  
\;  \subset \;  
\Omega^0 := \Omega \cap \xoverline[0.9]{\supp(G)}, 
$$ 
and 
$$
\supp(\Delta^+), \supp(\Delta^-)  
 \;  \subset \; 
 \Omega^{\pm} := \Omega \cap \supp(G). 
$$  
Note that $\Omega^{0}$ and $\Omega^{\pm}$ is a partition of $\Omega$:  $\Omega^{\pm} \cup \Omega^{0} = \Omega$ and $\Omega^{\pm} \cap \Omega^{0}  = \emptyset$. 
Hence 
$\Delta^0(i,j) \geq 0$ can be non-zero  when $(i,j) \in \Omega$ and $G_{i,j} = 0$, 
while $\Delta^+(i,j), \Delta^-(i,j) \geq 0$ can be non-zero when $(i,j) \in \Omega$ and $G_{i,j} > 0$. 
We will denote the number of non-zero entries of $\Delta^0$ by $p^0 = \Big|\Omega \cap \xoverline[0.9]{\supp(G)} \Big| \leq |\Omega|$, and the number of non-zero entries in $\Delta^+$ and $\Delta^-$ by $p^{\pm} = |\Omega \cap \supp(G)|$. 
With these new variables, the TSDP~\eqref{eq:sparseTSDP} can be reformulated as follows 
\begin{align}
\min_{\Delta^0, \Delta^+, \Delta^- \in \mathbb{R}^{n \times n}_+} 
& \sum_{i,j} \Delta_{i,j}^0 + \Delta^+_{i,j} + \Delta^-_{i,j}  \nonumber \\ 
\text{ such that } & \big( \Delta^0 + \Delta^+ - \Delta^- \big) \1_n = \0_n,    \label{eq:sparseTSDPv2} \\ 
& \hat \mub^\top \big( \Delta^0 + \Delta^+ - \Delta^- \big) = \mathbf z := \hat \mub^\top (I-G), \nonumber \\ 
& \Delta^0 \leq \1_{n,n}, \;
\Delta^- \leq G, \; \Delta^+ \leq \1_{n,n}-G, 
\nonumber \\ 
& \supp(\Delta^0) \subset \Omega^0, \;
\supp(\Delta^+), 
\supp(\Delta^-)  
\subset \Omega^{\pm}.  \nonumber 
\end{align}
The constraint  $\Delta^- \leq G$ ensures that $G + \Delta$ is nonnegative. 
Since $G + \Delta$ is also stochastic, as $\Delta \1_n = 0$, the constraints $\Delta^0 \leq \1_{n,n}$  and $\Delta^+ \leq \1_{n,n}-G$ are redundant. We have not observed much impact on the solver by removing or using these upper bound constraints. 
 Note that we are minimizing the sum of the entries of $\Delta^{+}$ and $\Delta^-$, and hence  we will always have at optimality that the supports of $\Delta^{+}$ and $\Delta^-$ are disjoint.

In any case, \eqref{eq:sparseTSDPv2} has only $2n$ equalities and less than $2 |\Omega|$ bounded variables. 
The fact that we have only $2n$ equalities allows us to solve this problem in roughly $O(n^3)$ operations via the dual. This is a significant improvement. Moreover, each inequality has at most $n$ variables, and hence this is a very sparse linear optimization problem, even sparser when the set $\Omega$ is sparse.  This will allow commercial solvers, such as Gurobi, to solve such problems much faster than the worst case $O(n^3)$ operations; for example, we will be able to solve problems with $n = 10^5$ with $\Omega$ having a few non-zero entries per row in a few minutes; see section~\ref{sec:Num} for experiments.

\subsection{Sparsity of $\ell_1$ norm minimization}

The optimal solution of~\eqref{eq:sparseTSDPv2} will have some degree of sparsity. In this section, we make this statement precise by providing an explicit upper bound on the number of non-zero entries in an optimal solution; see Theorem~\ref{lem:sparsitysol} below. 

Let us rewrite~\eqref{eq:sparseTSDPv2} in a more standard form, by vectorizing the non-zero entries of $\Delta^0$ in $\mathbf x^0 \in \mathbb{R}^{p^0}$,  of $\Delta^+$ in $\mathbf x^+ \in \mathbb{R}^{p^\pm}$, and of $\Delta^-$ in $x^- \in \mathbb{R}^{p^\pm}$, and let $\mathbf x = (\mathbf x^0,\mathbf x^+,\mathbf x^-) \in \mathbb{R}^p$:  
\begin{align}
\min_{\mathbf x = (\mathbf x^0, \mathbf x^+, \mathbf x^-)  \in \mathbb{R}^p}   \1_n^\top \mathbf x   
\quad \text{ such that } \quad  & A \mathbf x = \mathbf b, \; \mathbf x \geq 0, \; \mathbf  x^- \leq \mathbf g,    \label{eq:sparseTSDPv3}  
\end{align}
where $p = p^0 + 2p^\pm$, $A \in \mathbb{R}^{2n \times p}$  contains the coefficient of the $2n$ linear equalities and has two non-zero entries per column (since each variable appears in exactly two  constraints: $\Delta_{i,j}$ appears in $\Delta(i,:) \1_n = 0$ and $\mub^\top \Delta(:,j) = \mathbf z$), 
$\mathbf b = [\0_n; \mathbf z]$ is the right-hand side, and 
$\mathbf g \in \mathbb{R}^{p^\pm}$ contains the non-zero entries of $G$ in the \revise{set} $\Omega$. 

\begin{theorem} \label{lem:sparsitysol} 
If~\eqref{eq:sparseTSDPv3} is feasible, there exists an optimal solution, $\mathbf x^*$, of~\eqref{eq:sparseTSDPv3} such that 
\[
|\supp(\mathbf x^*)| \; \leq \; \min\big(|\Omega|,|\supp(G\cap \Omega)| + 2n\big). 
\] 
Equivalently, if the $\ell_1$-TSDP with support constraint~\eqref{eq:sparseTSDP} is feasible, there exists an optimal solution of~\eqref{eq:sparseTSDP}, $\Delta^*$, such that 
$$
|\supp(\Delta^*)| \leq \min\big(|\Omega|,|\supp(G \cap \Omega)| + 2n\big).
$$
\end{theorem}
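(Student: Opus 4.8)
The plan is to reduce the sparsity claim to the classical fact about basic feasible solutions of linear programs (the fundamental theorem of linear programming), applied to the standard-form problem \eqref{eq:sparseTSDPv3}, and then to count nonzeros carefully using the structure of the constraint matrix $A$. First I would rewrite \eqref{eq:sparseTSDPv3} in a form with \emph{only} equality constraints and nonnegativity, by introducing slack variables $\mathbf s \geq 0$ for the box constraints $\mathbf x^- \leq \mathbf g$, i.e.\ $\mathbf x^- + \mathbf s = \mathbf g$. This gives a standard-form LP $\min \mathbf c^\top \tilde{\mathbf x}$ such that $\tilde A \tilde{\mathbf x} = \tilde{\mathbf b}$, $\tilde{\mathbf x} \geq 0$, where $\tilde{\mathbf x} = (\mathbf x^0, \mathbf x^+, \mathbf x^-, \mathbf s)$ has $p + p^\pm$ components and $\tilde A$ has $2n + p^\pm$ rows (the $2n$ original equalities plus $p^\pm$ slacked box equalities). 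Since the problem is feasible and the objective is bounded below by $0$, an optimal solution exists; by the fundamental theorem of LP, there is an optimal \emph{basic} feasible solution $\tilde{\mathbf x}^*$, whose support has size at most the number of rows of $\tilde A$, but more usefully, at most the rank of $\tilde A$.

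The key step is the nonzero count. I would argue along two lines to get the minimum of the two bounds. For the bound $|\supp(\Delta^*)| \leq |\Omega|$: this is immediate and needs no LP machinery, since $\Delta$ has at most $|\Omega|$ entries allowed to be nonzero by the support constraint, so any feasible (hence any optimal) solution satisfies it. For the bound $|\supp(G \cap \Omega)| + 2n = p^\pm + 2n$: here I use the basic solution. A basic feasible solution of the slacked LP has at most $2n + p^\pm$ nonzero components among $\tilde{\mathbf x}^* = (\mathbf x^0, \mathbf x^+, \mathbf x^-, \mathbf s)$. The components of $\mathbf s$ do not contribute to $\supp(\Delta^*)$. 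Moreover, at optimality the supports of $\Delta^+$ and $\Delta^-$ are disjoint (as already noted in the paper, since we minimize $\sum \Delta^+_{i,j}+\Delta^-_{i,j}$), so the pair $(x^+_k, x^-_k)$ for each index $k \in \Omega^\pm$ contributes at most one nonzero to $\Delta$ — but I still need to be careful because a basic solution might a priori have both nonzero. The cleaner route: among the $p^\pm$ slack variables $\mathbf s$, observe that whenever $x^-_k > 0$ at an optimum we may take $x^+_k = 0$; and the number of indices $k$ with $x^-_k = g_k$ (slack zero) plus the number with $x^-_k > 0$... I would instead count directly: the basic solution has $\leq 2n + p^\pm$ nonzeros total; subtract off the slack nonzeros. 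For each $k \in \Omega^\pm$ with $s_k > 0$ we have $x^-_k < g_k$; combined with $x^+_k x^-_k = 0$ at optimality, the triple $(x^+_k, x^-_k, s_k)$ contributes at most $2$ nonzeros of which at least one ($s_k$ or the paired-off zero) is not a $\Delta$-entry, so at most one $\Delta$-entry per $k \in \Omega^\pm$; and for $k$ with $s_k = 0$, $(x^+_k, x^-_k)$ contributes at most... This needs tightening, so the honest approach is: total nonzeros of $\tilde{\mathbf x}^*$ is $\leq 2n + p^\pm$; the $\Delta$-nonzeros are the nonzeros among $(\mathbf x^0,\mathbf x^+,\mathbf x^-)$; each slack $s_k=0$ "frees up" nothing, each $s_k>0$ uses up one of the $2n+p^\pm$ slots without contributing to $\Delta$. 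Since there are $p^\pm$ slack variables and a basic solution of an LP with $2n+p^\pm$ equality rows has $\leq 2n + p^\pm$ nonzeros and $\geq$ (number of rows $-$ ...) — the right bound comes out to $\leq 2n + p^\pm$ $\Delta$-nonzeros once one notes at least ... Actually the simplest correct argument: a basic solution of a standard-form LP with $m$ equality rows and $N$ variables has at most $m$ nonzero components, and here $m = 2n + p^\pm$, but the $p^\pm$ equations $x^-_k + s_k = g_k$ each force at least one of $x^-_k, s_k$ to be basic/nonzero when $g_k > 0$; so among the $m = 2n+p^\pm$ potential nonzero slots, at least ... the count of $\Delta$-nonzeros is $m$ minus (number of $s_k$ that are nonzero), and I can always choose a basic optimal solution in which, for each $k$, $x^-_k$ and $s_k$ are not both basic — yielding at most $2n + p^\pm$ nonzeros total of which at least the $p^\pm - (\text{number of } x^-_k > 0)$ ... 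I will present this as: \emph{a basic optimal solution has at most $2n+p^\pm$ nonzeros, and since it cannot have both $x^-_k$ and $s_k$ basic simultaneously being ``extra'', the $\Delta$-part has at most $2n + p^\pm$ nonzeros}, then sharpen to exactly $2n + p^\pm$ by absorbing the slack bookkeeping.

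The main obstacle I anticipate is precisely this bookkeeping: getting the constant right as $p^\pm + 2n$ rather than a looser $p^\pm + p^0 + 2n$ or $2(2n) + p^\pm$. The clean way around it is to \emph{avoid slack variables for the zero-support part} and handle the box constraint $\mathbf x^- \le \mathbf g$ more cleverly, or alternatively to invoke the version of the fundamental theorem for LPs with bounded variables: a basic optimal solution of $\min \mathbf c^\top \mathbf x$ s.t.\ $A\mathbf x = \mathbf b$, $\mathbf 0 \le \mathbf x$, $\mathbf x^- \le \mathbf g$ has at most $2n$ components \emph{strictly between their bounds}; every other component equals $0$ or (for $x^-$ components) equals $g_k > 0$. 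Thus the nonzeros of $\mathbf x^*$ are: the $\le 2n$ strictly-interior ones, plus at most $p^\pm$ coming from $x^-$ components pinned at $g_k$. That gives exactly $|\supp(\mathbf x^*)| \le 2n + p^\pm = |\supp(G\cap\Omega)| + 2n$, and combined with the trivial $|\Omega|$ bound and the translation back to $\Delta^*$ via $\supp(\Delta^*) \subseteq \supp(\mathbf x^*)$ (disjointness of $\supp(\Delta^+), \supp(\Delta^-)$ ensures no double counting, though it is not even needed for the upper bound), the theorem follows. I would structure the written proof as: (1) trivial $|\Omega|$ bound; (2) reformulate with bounded variables and cite the bounded-variable fundamental theorem of LP; (3) count interior vs.\ boundary nonzeros to get $2n + p^\pm$; (4) pass back to $\Delta^*$.
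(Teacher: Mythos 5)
Your final argument is correct and follows essentially the same route as the paper: both reduce to the fundamental theorem of linear optimization applied to a vertex (basic feasible solution) of \eqref{eq:sparseTSDPv3} and then count — at most $2n$ variables can be off their bounds, the variables at a bound contribute a nonzero only when an $\mathbf x^-$-component is pinned at $g_k>0$ (at most $p^\pm$ of these), and the disjointness of $\supp(\mathbf x^+)$ and $\supp(\mathbf x^-)$ at optimality gives the $|\Omega|$ bound. The exploratory slack-variable detour in the middle is unnecessary (and you rightly abandon it); the bounded-variable vertex characterization you settle on is just the dual phrasing of the paper's ``$p$ linearly independent active constraints'' count.
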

\begin{proof} 
The feasible set of~\eqref{eq:sparseTSDPv3}, $\mathcal{P} = \{  \mathbf x = (\mathbf x^0, \mathbf x^+, \mathbf x^-)  
  \ | \ 
A\mathbf x = \mathbf b, x \geq 0, \mathbf x^- \leq \mathbf g \}$,   
is bounded, and hence is a polytope. Therefore, by the fundamental theorem of linear optimization, if $\mathcal{P}$ is non-empty, that is,~\eqref{eq:sparseTSDPv3} is feasible, 
there exists an optimal solution which is a vertex of  $\mathcal{P}$, say $\mathbf x^*$. 

Let us first show $|\supp(\mathbf x^*)| \leq |\Omega|$. 
For any optimal solution, $\mathbf x = (\mathbf x^0,\mathbf x^+,\mathbf x^-)$, we must have $\mathbf x^+_i = 0$ or $\mathbf x^-_i = 0$ for all $i$, 
otherwise we can strictly decrease the objective. This is because the entries of $\mathbf x^+ - \mathbf x^-$ are equal to the entries of $\Delta$ in $\supp(G) \cap \Omega$. This means that $|\supp([\mathbf x^+, \mathbf x^-])| \leq p^\pm = |\supp(G) \cap \Omega|$, and hence $|\supp(\mathbf x)| \leq |\Omega|$.  

It remains to prove that $|\supp(\mathbf x^*)| \leq |\supp(G)| + 2n$. 
Recall that any vertex of a $p$-dimensional polytope must have $p$ linearly independent active constraints. 
Since there are (only) $2n$ equalities in~\eqref{eq:sparseTSDPv3}, there must be $p-2n$ of the bound constraints active at any vertex of $\mathcal{P}$. Let $\mathbf x = (\mathbf x^0,\mathbf x^+,\mathbf x^-)$ be a vertex of $\mathcal{P}$. 
Each entry in $\mathbf x^+ \geq 0$ and $0 \leq \mathbf x^- \leq \mathbf g$ can touch at most one bound constraint (the entries in $\mathbf x^-$ cannot touch two of them, since by definition $\mathbf g > 0$). This is possible if $x^+_i = 0 = x^-_i$, or if $x^+_i = 0$ and $x^-_i = g_i > 0$. 
This means that $x^0$ must touch at least $\revise{p} - 2n - 2p^\pm = p^0 - 2n$ of the lower bound constraints, $x^0 \geq 0$, that is, $x^0$ must have at least $p^0 - 2n$ entries equal to zero. 
This implies that $|\supp(\mathbf x^0)| \leq 2n$, and gives the result, since $|\supp([\mathbf x^+, \mathbf x^-])| \leq |\supp(G \cap \Omega)|$ at optimality; see above. 
\end{proof}

Theorem~\ref{lem:sparsitysol} implies that if $G$ or $\Omega$ is sparse, then there is an optimal solution of~\eqref{eq:sparseTSDP} which is sparse.  
To attain the upper bound $|\supp(G \cap \Omega)|+2n$, $\Delta$ must be negative in all entries of $\supp(G)$ as it requires $\Delta^- = G$. 
This is unlikely to happen at optimality since we are minimizing the sum of these  variables. 
In fact, in all cases we have tested, optimal solutions are significantly sparser; see the numerical experiments in section~\ref{sec:Num}.  

\subsection{Efficient algorithms via a column-generation approach} 

The number of columns of $A$ in~\eqref{eq:sparseTSDPv3} is $O(|\Omega|)$. If 
$\Omega$ is sparse, we can formulate the problem directly and solve it very efficiently via commercial solvers; see section~\ref{sec:Num} for numerical experiments. 

However, if $\Omega$ is dense, for example $\Omega = \{(i,j) \ | \ 1 \leq i,j \leq n\}$, constructing the $O(n^2)$ columns of the constraint matrix $A$ is time consuming, even if $A$ is sparse (2 non-zeros per column). 
This is particularly wasteful if $G$ is sparse since we know only a few entries in the optimal solution $\Delta$ will be non-zero, namely at most $|\supp(G)| + 2n$ (Theorem~\ref{lem:sparsitysol}). 

This calls for a column-generation approach: 
First solve the problem for a well-chosen sparse $\Omega$. 
Then progressively add \revise{index pairs} in $\Omega$ that will reduce the objective function. 

\paragraph{Choice of initial $\Omega$} 
\revise{Given a set $\Omega$, deciding whether one can change the stationary distribution from $\mub$ to $\hat \mub$ has been studied in~\cite{brualdi1968convex, breen2015stationary}. However, finding a small initial support such that the problem is feasible, while avoiding a horrendous enumeration, appears to be a non-trivial question, to the best of our knowledge.}  
Luckily, we know that $\Omega = \supp(G+I)$ is always feasible (Theorem~\ref{th:IminusGsolution}), and hence we will initialize $\Omega$ with the support of $G+I$. 
For $G$ dense, it is unclear how to initialize $\Omega$ with a sparse matrix to make the problem feasible, and this is probably not as useful as the optimal solution is not necessarily very sparse.


\paragraph{Choice of entries to add to $\Omega$} 
To add \revise{index pairs} in $\Omega$ that will reduce the objective, we can resort to the well-known column-generation approach in linear optimization, see~\cite{nemhauser2012column} and the references therein.   
Let us recall the basic idea of column generation. We are trying to solve a linear optimization problem of the form 
\[
\min_{\mathbf x = (\mathbf x_1,\mathbf x_2) \in \mathbb{R}^{p_1 \times p_2}} 
\mathbf c_1^\top \mathbf x_1 + \mathbf c_2^\top \mathbf x_2 
\quad \text{ such that } \quad 
[A_1, A_2] \left[ \begin{array}{c}
     \mathbf x_1  \\
     \mathbf x_2 
\end{array} \right] 
 = \mathbf b, \;
\mathbf x \geq 0, 
\] 
where $p = p_1+p_2$, $A_i \in \mathbb{R}^{m \times p_i}$, 
$\mathbf b \in \mathbb{R}^m$, 
$\mathbf c_i \in \mathbb{R}^{p_i}$ for $i =1,2$. The dual is given by 
\[
\max_{\mathbf y \in \mathbb{R}^m} 
\mathbf b^\top \mathbf y \quad \text{ such that } 
\quad A_1^\top \mathbf y \leq \mathbf c_1 \text{ and }   A_2^\top \mathbf y \leq \mathbf c_2. 
\] 
Assume we solve the primal without the variables $\mathbf x_2$ so that the constraint $A_2^\top \mathbf y \leq \mathbf c_2$  disappears from the dual, 
and obtain $(\mathbf x_1^*,\mathbf y^*)$ as an optimal primal-dual solution. 
If $A_2^\top \mathbf y^* \leq \mathbf c_2$, that is, the dual solution, $\mathbf y^*$, remains feasible with the additional variables $\mathbf x_2$ being taken into account, then $(x_1^*,0)$ is an optimal solution to the original problem, by strong duality. If this is not the case, that is, $A_2(:,i)^\top \mathbf y^* > c_2(i)$ for some $i$, then allowing the variable $x_2(i)$ to enter the solution will allow us to reduce the objective (this is the rationale of a pivoting step in the simplex algorithm). The quantity $\mathbf c_2 - A_2^\top \mathbf y^*$ are  the so-called reduced costs of the variables $\mathbf x_2$ which needs to be nonnegative at optimality.   


Let us apply this idea to our problem. 
Let $\mathbf y^0 \in \mathbb{R}^{n}$ be the dual variables associated to the constraints $\Delta \1_n = 0$, and 
$\mathbf y^\mu \in \mathbb{R}^{n}$ to the constraints $\Delta^\top \hat \mub  = \mathbf z$. 
Since, at the \revise{first} step, $\Omega = \supp(G+I)$, 
we will add entries with index $(i,j) \notin \Omega$ such that $G_{i,j} = 0$. 
Adding $(i,j)$ in $\Omega$, that is, adding a variable in the primal corresponding to $\Delta^0_{i,j}$, adds a constraint in the dual, namely 
$y^0_i + \hat \mu_j y^\mu_j  \leq 1$. In fact, in the primal, the coefficient of the variable $\Delta^0_{i,j}$ is only non-zero for two constraints: $\Delta_{i,:} \1_n = 0$ and $\Delta_{:,j}^\top \hat \mub = z_j$.  
If this dual constraint is violated, that is,  $y^0_i + \hat \mu_i y^\mu_j  > 1$, this means that the current primal solution is not optimal for $\Delta^0_{i,j}=0$ (the reduced cost are not nonnegative for the current basis). 
Hence adding variables $(i,j)$ in $\Omega$ for which $y^0_i + \hat \mu_i y^\mu_j > 1$ will reduce the objective function value.  However, it is not possible for a generic solver to perform such a task without generating explicitly all constraints in the dual (and this takes time when $\Omega$ is dense); this is because the solver cannot guess the structure of our problem. 

To take advantage of these observations, 
we adopt the following strategy: 
We initialize $\Omega^0 = \supp(G+I)$. 
We solve~\eqref{eq:sparseTSDP}, 
and then we add to $\Omega$ the $|\Omega^0|$ largest entries from the rank-two matrix $\mathbf y^0 \1_n^\top + \hat \mub (\mathbf y^\mu)^\top$ that are larger than 1. We provide the commercial solver, Gurobi, with the previous basis, for a clever initialization. 
We stop adding new variables when the decrease\footnote{\revise{The error can only decrease between two iterations since the support of the optimal solution of~\eqref{eq:sparseTSDP} at iteration $i+1$, $\Omega^{i+1}$, contains the support from the previous iteration, $\Omega^{i}$; see step~7 in Algorithm~\ref{alg:CGtsdp}.}} in the relative error between two iterations is smaller than some prescribed accuracy $\delta$, that is, 
$\frac{\|\Delta^{\text{old}}\|_1 - \|\Delta^{\text{new}}\|_1}{\|G\|_1} < \delta$, or if the CG has converged, that is, $\mathbf y^0 \1_n^\top + \hat \mub (\mathbf y^\mu)^\top \leq \1_{n,n}$. 
We will use $\delta = 1\%$ and $\delta = 0.01\%$ in the numerical experiments.  

Algorithm~\ref{alg:CGtsdp} summarizes our strategy, which is a batch column-generation approach. It is more effective in MATLAB because adding only a single variable at each iteration would lead to many calls of the Gurobi solver and would be ineffective. Ideally, one should implement the strategy within Gurobi, in C, but this is out of the scope of this paper.

\algsetup{indent=2em}
\begin{algorithm}[ht!]
\caption{Column-Generation based algorithm for the TSDP~\eqref{eq:sparseTSDP} \label{alg:CGtsdp}} 
\begin{algorithmic}[1] 
\REQUIRE 
A stochastic irreducible matrix $G \in \mathbb{R}^{n \times n}$, 
a \revise{positive} target distribution $\hat \mub \in \revise{\mathbb{R}^{n}_{+}}$, 
a set $\Omega$ of interest (default: $\{(i,j) \ | \ 1 \leq i,j \leq n\}$), 
a target relative accuracy $0 \leq \delta < 1$ (default: $10^{-4}$). 
    \medskip  

\STATE $\text{obj}_{-1} = +\infty$, 

\STATE $\Delta^0 = \Delta(\al^*)$, 
$\text{obj}_{0} = ||\Delta^0||_1$

\STATE Initialize $\Omega^1 = \supp(G+I) \cap \Omega$, $n^+ = |\Omega^1|$, $\Omega^+ = \Omega^1$, $i = 1$. 

\textbf{Note}: $\Omega^1$ might not be feasible if $\Omega$ does not contain $\supp(G+I)$. 

\WHILE{ $\text{obj}_{i-2} - \text{obj}_{i-1} > \delta \|G\|_1$ and $\Omega^+ \neq \emptyset$ }

	\STATE Solve~\eqref{eq:sparseTSDP} with $\Omega = \Omega^i$ starting from $\Delta^{i-1}$, 
 and obtain the optimal solution $\Delta^i$ with   optimal dual variables $\mathbf y^0$ and $\mathbf y^\mu$, 
 let    $\text{obj}_{i} = ||\Delta^i||_1$. 

\STATE \label{alg:step6} Construct the matrix $R = \mathbf y^0 \1_n^\top + \hat \mub (\mathbf y^\mu)^\top - \1_{n,n}$, and let $\Omega^+$ be the set of \revise{index pairs} in $\Omega$ corresponding to the $n^+$ largest positive entries in $R$. 

\textbf{Note}: $R$ could have less than $n^+$ positive entries, in which case we only add the positive ones. In fact, if $\Delta^i$ is globally optimal, $\Omega^+ = \emptyset$. 

 \STATE Set $\Omega^{i+1} = \Omega^{i} \cup \Omega^+$, 
 $i \leftarrow i+1$. 
 
\ENDWHILE

\end{algorithmic}  
\end{algorithm}


\paragraph{Computational cost and key improvement in the  selection strategy}  

Solving~\eqref{eq:sparseTSDP} requires to solve linear optimization problems with $2n$ constraints. This requires roughly $O(n^3)$ operations in the worst case; see section~\ref{sec:linoptformu}.  
However, we will see that that it is empirically much faster as this linear program is very sparse, especially when $G$ is; see section~\ref{sec:salabsynt} for numerical experiments.  

In terms of memory, the constraint matrix $A \in \mathbb{R}^{2n \times O(|\Omega|)}$ actually only requires $O(|\Omega|)$ memory as there are two non-zeros per column of $A$. 
Empirically, we will see that a few iterations of Algorithm~\ref{alg:CGtsdp} (in most cases, less than 10) are enough to obtain good solutions. 
Hence $|\Omega| = O(\nnz(G))$, and the memory requirement is proportional to that of storing $G$. 

For large $n$, we noticed that constructing explicitly the rank-two matrix $T = \mathbf y^0 \1_n^\top + \hat \mub (\mathbf y^\mu)^\top$ to extract the \revise{index pairs} corresponding to the largest entry was a bottleneck in the algorithm (step~\ref{alg:step6}): it requires $O(n^2)$ memory and $O(n^2 \log(n^2))$ operations (to sort the entries of $T$).   
This rank-two matrix has positive factors in each rank-one term, namely $\1_n$ and $\hat \mub$, and hence the largest entries of $T$ depend directly on the largest entries of $\mathbf y^0$, $\mathbf y^\mu$ and $\hat \mub$. 
We have designed a simple heuristic, when $n$ is large, to tackle this problem: for a parameter $m \leq n/2$, 
extract the $m$ largest indices in $\mathbf y^0$ and the $m$ largest indices in $\hat \mub$ and put them in $\mathcal{I}$ (possibly remove duplicates),  and extract the $2m$ largest indices in $\mathbf y^\mu$ and put them in $\mathcal{J}$. Then perform an exhaustive search on the submatrix $T(\mathcal{I},\mathcal{J})$ which requires $O(m^2 \log(m^2))$ operations. We used \revise{$m = 200$} in our implementation, and use this heuristic as soon as $n > 200$. This means that, when $n$ is large, we add at most $m^2=40,000$ entries in $\Omega$ at each step. 
Note that more sophisticated heuristics exist\footnote{We actually tried the heuristic from~\cite{higham2016estimating}, but it does not scale well to extract a large number of \revise{index pairs} as it uses a dense $n$-by-$p$ matrix, where $p$ is the number of \revise{index pairs} to extract. In our case, $p$ is of the order of $n$ making this heuristic require $O(n^2)$ memory and operations, which is what we need to avoid.}, e.g.,~\cite{higham2016estimating} and \cite{ballard2015diamond}. 
However, our current heuristic performs  extremely well on all the tested cases. For example, we tested against the exhaustive search for the real data set moreno (with $n = 2155$; see section~\ref{sec:realdata}): Algorithm~\ref{alg:CGtsdp} with $\delta = 0$ (that is, an optimal solution is sought) took 41 seconds with an exhaustive search in step~\ref{alg:step6}, and only 5.6 seconds with our heuristic, to obtain the same optimal objective function value.   



\section{Numerical experiments}  \label{sec:Num}

We will compare the following solutions: 
\begin{itemize}

\item \revise{$MH$: the Metropolis-Hastings algorithm; see section~\ref{sec:intro} and in particular equation~\eqref{eq:MHalso}.}   

    \item $D$: the closed-form solution $\Delta(\al^*)$ described in Theorem~\ref{th:IminusGsolution}.   This solution is computed extremely fast, requiring $O(\nnz(G))$ operations.

\item $S$: The optimal solution of~\eqref{eq:sparseTSDP} with $\Omega = \supp(G+I)$. $S$ stands for sparse. 

\item $GS$: The optimal solution of~\eqref{eq:sparseTSDP} with $\Omega = \{(i,j) \ | \ 1 \leq i,j \leq n\}$ computed by solving directly~\eqref{eq:sparseTSDP}. $GS$ stands for global solution.  

\item $CG(\delta)$: The solution of~\eqref{eq:sparseTSDP} with $\Omega = \{(i,j) \ | \ 1 \leq i,j \leq n\}$ computed with Algorithm~\ref{alg:CGtsdp} 
with stopping criterion $\delta$; 
we will use $\delta \in \{ 10^{-2}, 10^{-4}, 0\}$. 
$CG$ stands for column generation. 
The case $\delta = 0$ should generate a solution with the same objective as $GS$, but hopefully significantly faster when $G$ is sparse.  
Note that choosing $\delta$ sufficiently large generates the solution $S$ since Algorithm~\ref{alg:CGtsdp} is initialized with $\Omega = \supp(G+I)$. 
    
\end{itemize} 
By construction, we know in advance that, in terms of objective function values, we have 
\[
     \| D \|_1 
 \geq 
 \|S\|_1 
 \geq
  \| CG(\delta) \|_1 
 \geq 
  \| CG(0)  \|_1  
 =  
  \|GS\|_1. 
\]
In fact, $S$ is the optimal solution for $\Omega = \supp(G+I)$, $D$ is a feasible solution for that support, 
and $CG(\delta)$ is initialized with $\Omega = \supp(G+I)$. 
It will be interesting to compare the gaps between these objectives, and the computational times. 
\revise{Moreover, $\|MH\|_1 \geq \| S \|_1$ since the support of $MH$ is contained in that of $G+I$; see section~\ref{sec:intro}.}

We do not compare with the algorithms in~\cite{BerkhoutHV23} because they are not competitive. For example, for the 3 real data sets they consider, their fastest approach able to compute a feasible solution in all cases (namely, the rank-1 steps heuristic--R1SH) takes 
    228, 50 and 75 seconds, respectively; 
    see~\cite[Table~7]{BerkhoutHV23}. 
    The solution we will generate with $CG(0)$ is computed faster (less than 6 seconds in all cases), is sparse, and is globally optimal; see Tables~\ref{tab:resultsreal001}, \ref{tab:resultsreal01} and~\ref{tab:resultsreal05}.  
Moreover, the solutions in~\cite{BerkhoutHV23} do not minimize the component-wise $\ell_1$ norm, but matrix norms, and do not produce sparse solutions; in fact, they produce dense low-rank solutions. 
To cite their own words, 
\begin{quote} 
``The applicability of our heuristics for sparse $G$ is hindered. Being able to perturb only one row in a stochastic matrix, it is not hard to imagine
 that the number of reachable stochastic matrices is limited. In other words, finding a
 rank-1 perturbation towards a specific stationary distribution (the main focus of this paper) is often infeasible.'' 
\end{quote}

 \paragraph{Quality measures} We will report the following three quantities for each solution $\Delta$:  
\begin{itemize}
    \item \textbf{obj}: the relative objective value $\frac{\|\Delta\|_1}{\|G\|_1}$,

    \item  \textbf{spars}: the relative sparsity $\frac{|\supp(\Delta)|}{|\supp(G+I)|}$, and 

    \item \textbf{time}: the computational time in seconds. 
\end{itemize}

 \paragraph{Softwares and machine}   
All experiments are performed on a laptop with processor 12th Gen Intel(R) Core(TM) i9-12900H  2.50 GHz, 32Go RAM, on MATLAB R2019b.  
The  code is available from \url{https://gitlab.com/ngillis/TSDP}. To solve the linear optimization problems, we use the commercial solver Gurobi\footnote{Although it is a closed-source commercial software, see  \url{https://www.gurobi.com/}, free use is possible for academic users.}, version 10.00.

\subsection{Synthetic data sets} \label{sec:synth}

In this section, we mostly focus on the run time of the proposed algorithms depending on the size and sparsity of the input matrix, and the gap 
 in the objective function between the different approaches. This will allow us to estimate, empirically, their computational cost and distance to optimality.  

Getting inspiration from queuing matrices, where each node is connected only to its two neighbors, one on the left and one on the right, 
 we generate irreducible matrices as follows: Given a parameter $k$, each node is connected to its $k$ neighbors on the right and  to its $k$ neighbors on the left. Each non-zero entry is randomly generated using the uniform distribution in [0,1], and $G$ is then normalized to make it stochastic. 
Hence each row of $G$ has $2k$ non-zero entries (except for the first $k$ and last rows $k$ that do not have enough left and right neighbors, respectively). 
Here is an example of such a matrix generated randomly with parameters $n=5$ and $k=2$ (and two digits of accuracy): 
\[
G = \left[ \begin{array}{ccccc} 
 0 &  0.32 &  0.68 &  0 &  0 \\ 
 0.18 &  0 &  0.39 &  0.43 &  0 \\ 
 0.26 &  0.32 &  0 &  0.30 &  0.12 \\ 
 0 &  0.33 &  0.32 &  0 &  0.35 \\ 
 0 &  0 &  0.28 &  0.72 &  0 \\ 
\end{array} \right]. 
\] 
We set the vector\footnote{We also made experiments with $\hat \mub = \frac{\1_n}{n}$. It produces somewhat similar results, but the differences between the solutions $CG(\delta)$ were slightly less pronounced, and hence we chose to report the results for $\hat \mub =  G^\top \frac{\1_n}{n}$.} 
$\hat \mub =  G^\top \frac{\1_n}{n}$, 
that is, we initialize $\hat \mub$ with equal probability for all nodes, then apply once step of the power iteration. This makes $\hat \mub$ somewhat closer to $\mub$.   
\revise{Note that the sparsity pattern of $G$ is symmetric, and hence $MH$ is guaranteed to generate a feasible solution.} 

\revise{
\begin{remark}[Choice of $\hat \mub$] \label{rem:choicemuh}
There are plenty of other possible choices for $\hat \mub$, and ours here is somewhat arbitrary. 
It balances the importance of the nodes in the graph, as $\hat \mub$ is close to $\frac{\1_n}{n}$. For example, in social networks, this corresponds to make everyone have a similar popularity, and in road networks to make all roads equally congested. 

Another example, that we considered 
in section~\ref{sec:rank1}, is the choice $\hat \mub 
= \frac{1}{1+\lambda}(\mub + \lambda \e_j)$  
for some $j$ and $\lambda > 0$. This is an interesting perturbation since it increases the importance of a targeted node in the chain, e.g., to increase the popularity of a person in a social network, or increase the congestion in a road with little traffic. 
However, this leads to less interesting numerical results since we have proved that, in that case, the solution $D$, that is, the closed-form solution $\Delta(\al^*)$ described in Theorem~\ref{th:IminusGsolution}, leads to a sparse optimal rank-one solution as soon as $\lambda$ is sufficiently large; see Theorem~\ref{lem:optimalrankone}.   
\end{remark}
}


    


\subsubsection{Comparison of the different solutions}

We let $n = 1000$ 
and 
\revise{$k \in \{ 1,2,5,10,50,100, 500\}$}, and Table~\ref{tab:resultssyntn1000} 
reports 
the relative objective values (obj),  
 the average relative sparsity (spars) and the computational time for 10 randomly generated matrices.

\begin{table}[ht!]
    \centering
    \begin{tabular}{cc||c|c|c|c|c|c|c}
  &  & \revise{$MH$} &  $D$ & $S$ & $CG(10^{-2})$ &  $CG(10^{-4})$  &  $CG(0)$ & $GS$ \\ \hline     
 \multirow{3}{*}{\begin{sideways} $k=1$ \end{sideways} }      
 & obj (\%) & \revise{43.76} &  178.88 &  23.10 &  22.73 &  11.48 &  11.47 &  11.47 \\
 & spars (\%) & \revise{62.54} &  99.90 &  58.91 &  58.87 &  46.03 &  45.93 &  45.90 \\
 & time (s.) & \revise{0.01} &  0.20 &  0.03 &  0.06 &  0.99 &  1.57 &  25.53 \\  \hline   
   \multirow{3}{*}{\begin{sideways} $k=2$ \end{sideways} }   
& obj (\%) & \revise{58.71} &  175.53 &  11.62 &  11.21 &  7.51 &  7.51 &  7.51 \\
 & spars (\%) & \revise{59.93} &  99.90 &  31.92 &  31.62 &  25.29 &  25.29 &  25.29 \\
 & time (s.) & \revise{$<$ 0.01} &  0.19 &  0.04 &  0.08 &  0.63 &  0.79 &  25.16 \\ \hline   
    \multirow{3}{*}{\begin{sideways} $k=5$ \end{sideways} }   
 & obj (\%) & \revise{64.13} &  160.52 &  3.89 &  3.75 &  3.10 &  3.10 &  3.10 \\
 & spars (\%) & \revise{54.56} &  99.90 &  12.91 &  12.69 &  10.88 &  10.88 &  10.88 \\
& time (s.) & \revise{$<$ 0.01} &  0.04 &  0.08 &  0.19 &  0.74 &  1.30 &  25.03 \\  \hline   
     \multirow{3}{*}{\begin{sideways} $k=10$ \end{sideways} }   
& obj (\%) & \revise{65.23} &  105.36 &  1.87 &  1.82 &  1.69 &  1.69 &  1.69 \\
 & spars (\%) & \revise{52.39} &  99.90 &  6.21 &  6.14 &  5.50 &  5.50 &  5.50 \\
 & time (s.) & \revise{$<$ 0.01} &  0.02 &  0.22 &  0.48 &  1.57 &  1.97 &  25.61 \\   \hline   
      \multirow{3}{*}{\begin{sideways} $k=50$ \end{sideways} }   
& obj (\%) & \revise{66.78} &  19.72 &  1.02 &  0.94 &  0.94 &  0.94 &  0.94 \\
 & spars (\%) & \revise{50.51} &  99.90 &  1.52 &  1.41 &  1.40 &  1.40 &  1.40 \\
 & time (s.) & \revise{0.03} &  0.01 &  2.68 &  3.41 &  4.10 &  4.81 &  28.27 \\   \hline   
       \multirow{3}{*}{\begin{sideways} \hspace{-2mm} $k$\hspace{0.5mm}$=$\hspace{0.5mm}$100$  \end{sideways} }   
 & obj (\%) & \revise{66.57} &  13.65 &  1.40 &  1.39 &  1.39 &  1.39 &  1.39 \\
 & spars (\%) & \revise{50.26} &  99.89 &  1.21 &  1.20 &  1.20 &  1.20 &  1.20 \\
 & time (s.) & \revise{0.07} &  0.01 &  16.19 &  18.57 &  18.57 &  21.01 &  31.74 \\   \hline   
        \multirow{3}{*}{\begin{sideways} \hspace{-2mm} $k$\hspace{0.5mm}$=$\hspace{0.5mm}$500$  \end{sideways} }   
      & obj (\%) &   \revise{66.74} &  0.24 &  0.03 &  0.03 &  0.03 &  0.03 &  0.03 \\
 & spars (\%) &\revise{50.05} &  99.90 &  0.11 &  0.11 &  0.11 &  0.11 &  0.11 \\
& time (s.) & \revise{0.04} &  0.02 &  57.29 &  57.29 &  57.29 &  57.29 &  57.43 \\ 
    \end{tabular}
    \caption{Experiments on synthetic data sets reporting the average values for ten randomly generated  queue-like transition matrices (with about $2k$ non-zero entries per row) and $\hat \mub = G^\top \1_n/n$.} 
    \label{tab:resultssyntn1000}
\end{table}

We observe the following. 
\begin{itemize}
    \item In terms of relative objective values, $D$ performs rather poorly for small values of $k$. In particular, for $k \leq 10$, its relative error is larger than $100\%$ while the globally optimal solution, $GS$, is below $11\%$ in all cases. For $k=500$, when $G$ is dense, it actually provides a good solution, as predicted by theory because $\|\mub - \hat \mub\|_2$ is small~\eqref{eq:boundoptDelta} (the optimal $\Delta$ has a very small norm, 0.03\% of that of $G$).  
\revise{$MH$ does provide reasonable solutions for small $k$, although far from global optimality, and performs the worst for $k \geq 50$.} 
    The solution $S$ provides rather good solutions, especially when $k \geq 5$. This shows that restricting the solution to the support of $G+I$ allows us to recover reasonable solutions, compared to the global one.  
    $CG(10^{-2})$ performs similarly as $S$, while $CG(10^{-4})$ provides solutions of similar quality as $CG(0)$ which provides globally optimal solutions, as $GS$ does. 


    \item In terms of sparsity, the support of $D$ coincide with that of $G+I$ except for one row, as expected (see Lemma~\ref{lem:sparsityDal}),  explaining the 99.9\% relative sparsity in all cases (since $n= 1000$).
\revise{the sparsity of $MH$ is around 50\% of that of $G+I$ because the M-H algorithm does not use the full support (some rows of $\Delta$ are zeros).} 
    All the other solutions are significantly sparser than $G+I$, especially when $k$ grows. This shows that minimizing the component-wise $\ell_1$ norm is able to promote sparsity; see Theorem~\ref{lem:sparsitysol}. For example, when $G$ is dense ($k=500$), the optimal solution only has 0.11\% of non-zero entries.  Even for $k=1$, where $G+I$ has 3 non-zero per rows (except the first and last row which have only 2), the optimal solution has less than 50\% of these zeros, meaning 1.5 non-zeros on average per row for the optimal solution (which could potentially use all the $n^2$ entries of $\Delta$). 

    \item In terms of computational time, computing \revise{$MH$ and} $D$ is extremely fast, as expected (the reason why the case $k=1,2$ is slightly slower for $D$ is explained in Remark~\ref{rem:cond}). 
    Computing $S$ is fast for sparse $G$, and becomes more expensive as $k$ grows; see section~\ref{sec:salabsynt} for an extensive numerical experiment on this case. 
    The computational cost of $CG(\delta)$ grows as $\delta$ decreases, as expected, but remains close to that of $S$ (remember, $S$ is computed at the first step of Algorithm~\ref{alg:CGtsdp}, that is, $S = CG(\delta)$ for any $\delta$ sufficiently large). The reason is that the computational cost of each iteration of the column-generation approach (Algorithm~\ref{alg:CGtsdp}) is proportional to the computation of $S$: the number of columns added is roughly the same at each step. 
Finally, $CG(0)$ is significantly faster than $GS$, as expected, since $GS$ does not leverage the column generation and solves the full problem at once, which requires forming a large constraint matrix. 
Note that, when $G$ is dense ($k=500$), $S$, $CG(\delta)$ and $GS$ coincide since they all solve the full problem, on $\supp(G+I)$, which is time consuming. 
    
\end{itemize}

\begin{remark}[Conditioning] \label{rem:cond}
To compute $\Delta(\al^*)$, the stationary distribution $\mub$ of $G$ is needed. 
    For large $n$ and small $k$, the condition number to calculate $\mub$ is very high; see~\cite{kirkland2010column} for a discussion on this issue. 
    For example, for $n=1000$ and $k=1$, the \texttt{eigs} function of MATLAB runs into numerical problems and cannot return a feasible solution (it return NaN for the largest eigenvalue, and $\mub$ with negative entries).  
     In this case, we resort to 100 iterations of the power method, initialized with $\mub = \1_n/n$, for simplicity. 
\end{remark}

\subsubsection{Scalability to compute $S$} \label{sec:salabsynt}

Empirically, we have observed that the number of iterations of Algorithm~\ref{alg:CGtsdp} is relatively small, especially when $\delta$ is large; see Figure~\ref{fig:morenoCG} for an experiment on real data sets. 
Moreover, we have observed that the computational cost of each iteration is similar, because we add to the support, $\Omega$, roughly the same number of \revise{index pairs}. Hence, the  cost of computing $CG(\delta)$ is the cost of computing $S$ times the number of iterations, which is typically small; for example, for $\delta = 10^{-2}$, it never exceeded 3 iterations. Hence it makes sense to analyse the computational cost of computing $S$, to get an idea how Algorithm~\ref{alg:CGtsdp} scales with $n$ 
and~$k$. Figure~\ref{fig:timeSall} reports  the total time to compute $S$ for various values of $k \in \{1, 2, 4, 8, 12, 16, 32, 64, 128\}$ 
and
\[
n \in \{1000, 1669, 2783, 4642, 7743, 12916, 21545, 35939, 59949, 10^5, \revise{2 \cdot 10^5}\} 
\] 
with log-spaced values\footnote{We used \texttt{[ceil(logspace(3, 5, 10)), 2*1e5]} in MATLAB.}.  
To limit the computational time, we combine values of $n$ and $k$ only when $ \nnz(G) \approx 2nk \leq 450,000$. Above 450,000, the computational time exceeds 200 seconds. 
\begin{figure}[ht!]
\begin{center}
\begin{tabular}{c}
\includegraphics[width=0.8\textwidth]{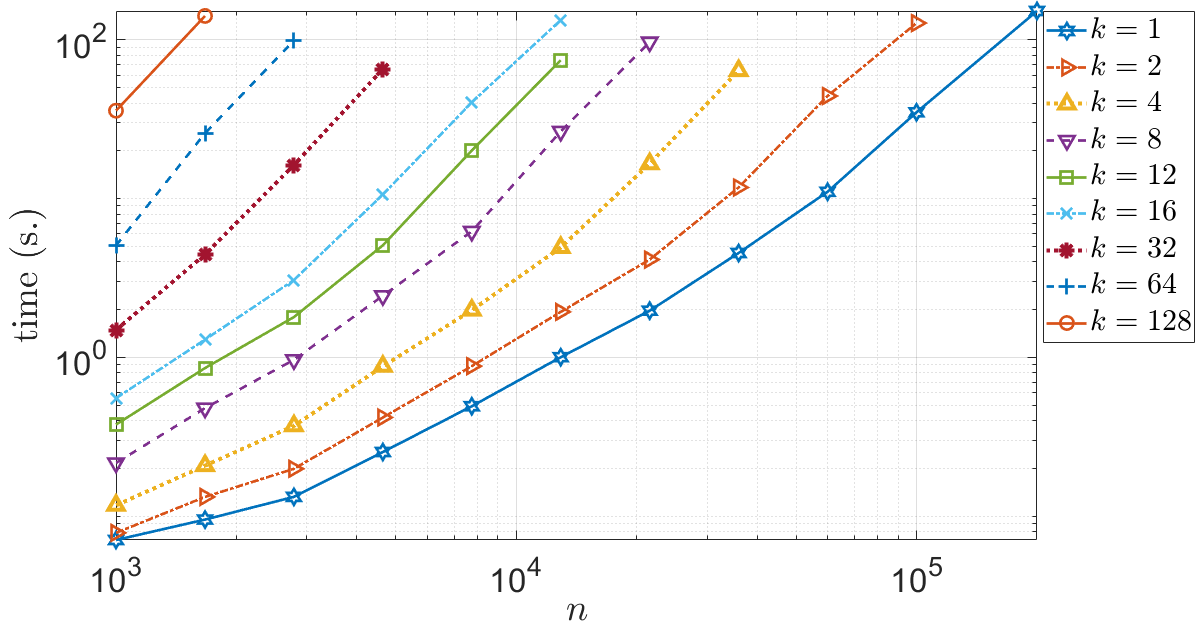} 
\end{tabular}
\caption{Average computational time to compute the solution $S$, that is, the solution of~\eqref{eq:sparseTSDP} with $\Omega = \supp(G+I)$, for five queue-like transition matrices generated randomly (with about $2k$ non-zero entries per row) and $\hat \mub = G^\top \1_n/n$. This time accounts for the formulation  of~\eqref{eq:sparseTSDP} and its resolution.}  
\label{fig:timeSall}
\end{center}
\end{figure}

There is a dependence between the computational time and $n$ that is somewhat linear, in a log-log plot. 
The average slope for computational times larger than 1 second is 2.2, meaning that the time depends roughly  quadratically on $n$, for $k$ fixed.  Similarly, the average slope in a log-log plot between the computational time and $k$ is 1.98, hence the time also depends quadratically on $k$, for $n$ fixed. 
This allows us to solve large problems relatively fast, e.g., \revise{$n = 2 \cdot 10^5$} and $k=1$ in about 150 seconds. It turns out that, quite surprisingly, the bottleneck of our algorithm is not to solve the large (sparse) linear optimization problems, but to form the constraint matrix $A$, of size\footnote{Recall the number of variables in the linear programs 
are $2\nnz(G)$ for $\Delta^+$ and $\Delta^-$, and $n$ for $\Delta^0$ since the diagonal entries of our synthetic data sets $G$ are zeros.} $2n \times (2\nnz(G)+n)$, with two non-zeros per column of $A$. 
Constructing such a large sparse matrix does not require a linear time in the number of non-zeros (there are about $4 \nnz(G)$) but larger due to the access to memory, among other things. 

Figure~\ref{fig:timeSlinopt} reports the time to solve the linear optimization problems to obtain $S$. Comparing to Figure~\ref{fig:timeSall}, we see that the time to solve the linear optimization problems is an order of magnitude smaller than formulating the problem. 
\begin{figure}[ht!]
\begin{center}
\begin{tabular}{c}
\includegraphics[width=0.8\textwidth]{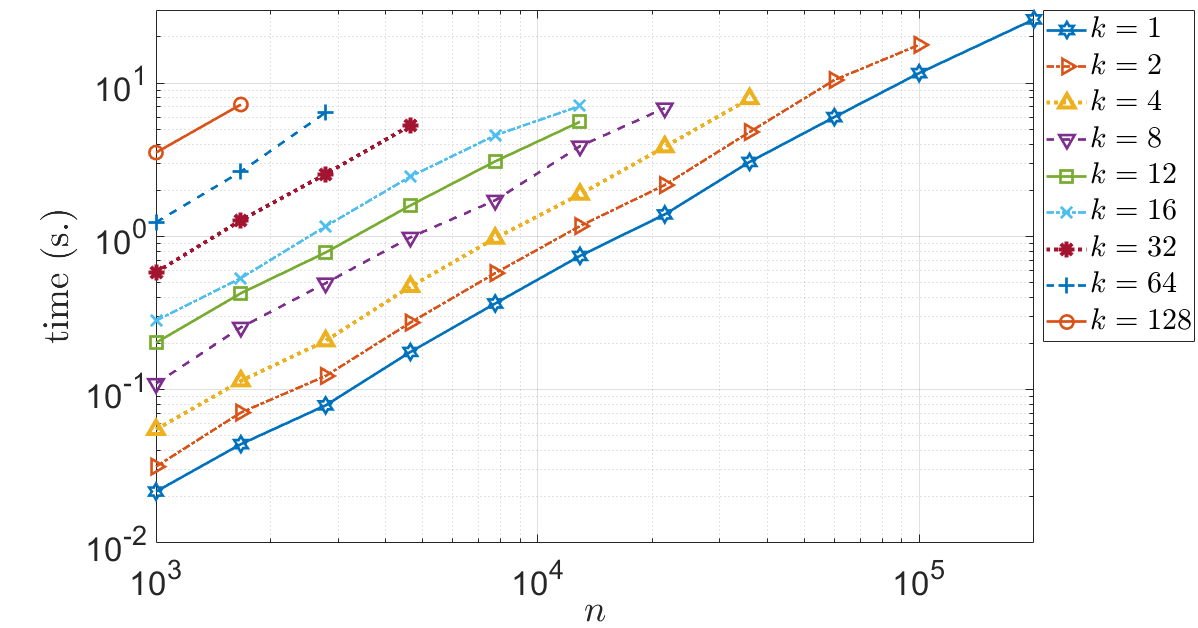} 
\end{tabular}
\caption{Average computational time to solve  of~\eqref{eq:sparseTSDP} with $\Omega = \supp(G+I)$, for 5 queue-like transition matrix generated randomly (with about $2k$ non-zero entries per row) and $\hat \mub = \1_n/n$. This time accounts only for the resolution~\eqref{eq:sparseTSDP}, not the formulation.}  
\label{fig:timeSlinopt}
\end{center}
\end{figure} 
Possibly the construction of the sparse constraint matrix could be accelerated by using another language than MATLAB, e.g., C or C++.

In summary, the proposed approach to solve the $\ell_1$-TDSP with support constraint requires $O(\nnz(G))$ memory and, empirically, about $O(\nnz(G)^2)$ time.   
Figure~\ref{fig:nnzGvstime} shows the time required to solve~\eqref{eq:sparseTSDP} with $\Omega = \supp(G+I)$ as a function of $\nnz(G) \approx 2nk$ for the synthetic data sets (these are the same values as in Figure~\ref{fig:timeSall} but presented differently), which follows a quadratic trend, as noted above, namely, $\text{time} \approx \zeta \nnz(G)^2$ with \revise{$\zeta = 9 \cdot 10^{-10}$}.  
\begin{figure}[ht!]
\begin{center}
\begin{tabular}{c}
\includegraphics[width=0.5\textwidth]{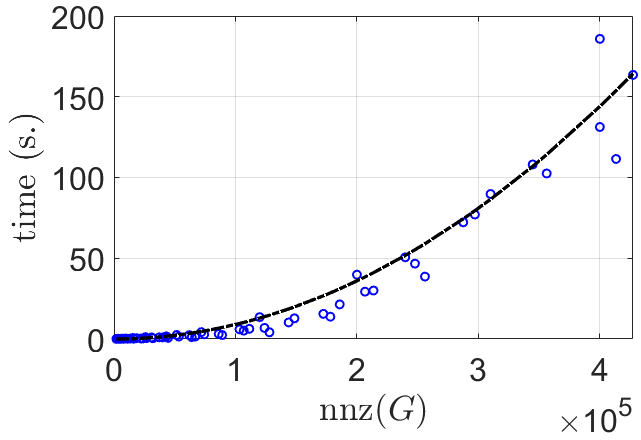} 
\end{tabular}
\caption{Average computational time to formulate and solve~\eqref{eq:sparseTSDP} with $\Omega = \supp(G+I)$ for 5 queue-like transition matrix generated randomly (with about $2k$ non-zero entries per row) and $\hat \mub = \1_n/n$. The black curve is $\zeta \nnz(G)^2$ with \revise{$\zeta = 9 \cdot 10^{-10}$}. 
These are the same values as in Figure~\ref{fig:timeSall}.}  
\label{fig:nnzGvstime}
\end{center}
\end{figure}

\subsection{Real data sets} \label{sec:realdata}

In this section, we use the same three sparse data sets as in~\cite{BerkhoutHV23}; see Table~\ref{tab:my_data}. 
\revise{They come from the KONECT project 
(\url{http://konect.cc/}).}  
\begin{table}[ht!]
    \centering
    \begin{tabular}{c|ccc}
data sets   & email   & road & moreno \\ \hline       
size ($n$)      & 1133 & 1030   & 2155  \\
   average number of \revise{non-zeros} per row    
   & 9.62   & 2.51  & 5.32  \\ 
    \end{tabular}
    \caption{Sparse real data sets from~\cite{BerkhoutHV23}.} 
   \label{tab:my_data}
\end{table}    
They represent the following: 
\begin{itemize}
    \item email: email-conversation network of university employees at the University of Rovira i Virgili\footnote{\revise{\url{http://konect.cc/networks/arenas-email/}}}. \revise{Nodes are employees
and each undirected edge represents that at least
one email was sent between the employees. The support of $G$ is symmetric, but $G$ is not symmetric after the normalization required to make $G$ stochastic.}   

    \item road: road network between the largest cities in Europe\footnote{\revise{\url{http://konect.cc/networks/subelj_euroroad/}}}. \revise{As for the previous data set, the support of $G$ is symmetric, but $G$ is not symmetric.}   

    \item moreno: high-school network of student relationships from a survey from
1994/1995 on a high school where each student had to indicate his/her 5 best female and male friends\footnote{\revise{\url{http://konect.cc/networks/moreno_highschool/}}}. \revise{This is a directed network, that is, the support of $G$ and $G$ are not symmetric.}  
\end{itemize}

We use $\hat{\mu} = (1- \revise{\epsilon}) \mu + \epsilon \frac{ \mathbf 1_n}{n}$ for $\epsilon \in \{0.01, 0.1, 0.5\}$.  \revise{This means that we are trying to balance the importance of the nodes in the chain, e.g., for the road network, to make the roads more evenly congested; see Remark~\ref{rem:choicemuh}.} 
   Table~\ref{tab:resultsreal001} reports the results for $\epsilon = 0.01$, 
   Table~\ref{tab:resultsreal01} for $\epsilon = 0.1$, and 
   Table~\ref{tab:resultsreal05}  for $\epsilon = 0.5$. 
\begin{table}[ht!]
    \centering
    \begin{tabular}{cc||c|c|c|c|c|c|c}
  & &  \revise{$MH$} & $D$ & $S$ & $CG(10^{-2})$ &  $CG(10^{-4})$  &  $CG(0)$ & $GS$ \\ \hline     
 \multirow{3}{*}{\begin{sideways} email \end{sideways} } 
  & obj (\%) & \revise{3.82} & 5.02 &  0.42 &  0.20 &  0.17 &  0.17 &  0.17 \\ 
 & spars (\%) & \revise{53.44} & 99.40 &  11.77 &  10.64 &  10.26 &  10.25 &  10.25 \\ 
 & time (s.) & \revise{0.03} & 0.03 &  0.15 &  0.32 &  0.86 &  1.15 &  36.31  \\ \hline  
 \multirow{3}{*}{\begin{sideways} road \end{sideways} } 
 & obj (\%) & \revise{0.72} & 1.92 &  0.41 &  0.36 &  0.31 &  0.28 &   0.28 \\ 
 & spars (\%) & \revise{45.96} & 99.70 &  35.74 &  35.96 &  33.57 &  31.52 &  31.16  \\ 
 & time (s.) & \revise{0.02} & 0.02 &  0.03 &  0.08 &  0.25 &  1.26 &  27.25 \\ \hline   
 \multirow{3}{*}{\begin{sideways} moreno \end{sideways} } 
   & obj (\%) & \revise{116.54}  & 21.73 &  4.90 &  0.98 &  0.79 &  0.78 &  0.78 \\ 
  & spars  (\%) & \revise{81.67} & 99.93 &  24.42 &  21.72 &  20.14 &  20.03 &  20.03   \\ 
  & time (s.) & \revise{0.11} &  0.09 &  0.21 &  0.94 &  3.98 &  6.41 &  257.86  \\   
    \end{tabular}
    \caption{Experiments on real data with $\epsilon = 0.01$.} 
    \label{tab:resultsreal001}
\end{table} 

\begin{table}[ht!]
    \centering
    \begin{tabular}{cc||c|c|c|c|c|c|c}
  & &\revise{$MH$}&  $D$ & $S$ & $CG(10^{-2})$ &  $CG(10^{-4})$  &  $CG(0)$ & $GS$ \\ \hline    
 \multirow{3}{*}{\begin{sideways} email \end{sideways} } 
 & obj (\%) & \revise{26.68} &   36.92 &  4.23 &  2.13 &  2.13 &  2.13 &  2.13 \\ 
 & spars (\%) & \revise{53.44} & 99.40 &  12.94 &  12.84 &  12.83 &  12.84 &  12.83  \\ 
 & time (s.) & \revise{0.03} & 0.03 &  0.12 &  0.48 &  0.68 &  0.89 &  37.05  \\ \hline   
 \multirow{3}{*}{\begin{sideways} road \end{sideways} } 
 & obj (\%) & \revise{6.67}  &  18.39 &  4.01 &  3.60 &  2.86 &  2.86 &  2.86 \\ 
 & spars (\%) &  \revise{45.96} & 99.70 &  35.96 &  35.96 &  31.49 &  31.43 &  31.16 \\ 
 & time (s.) & \revise{0.02} & 0.02 &  0.03 &  0.08 &  0.99 &  1.85 &  27.33 \\ \hline        
 \multirow{3}{*}{\begin{sideways} moreno \end{sideways} } 
  & obj (\%) & \revise{116.54} & 69.92 &  18.29 &  6.40 &  5.61 &  5.60 &  5.60 \\ 
 & spars (\%) & \revise{81.67} & 99.93 &  29.88 &  22.94 &  21.52 &  21.51 &   21.51 \\ 
 & time (s.) & \revise{0.11}  & 0.09 &  0.21 &  1.43 &  5.73 &  7.20 &  255.08  \\ 
    \end{tabular}
    \caption{Experiments on real data with $\epsilon = 0.1$.} 
    \label{tab:resultsreal01}
\end{table} 

 \begin{table}[ht!]
    \centering
    \begin{tabular}{cc||c|c|c|c|c|c|c}
  & &   \revise{$MH$} & $D$  & $S$ & $CG(10^{-2})$ &  $CG(10^{-4})$  &  $CG(0)$ & $GS$ \\ \hline      
 \multirow{3}{*}{\begin{sideways} email \end{sideways} } 
 & obj (\%) & \revise{67.65} &  107.47 &  28.22 &  24.93 &  24.93 &  24.93 &   24.93 \\ 
 & spars (\%) & \revise{53.44} & 99.40 &  33.64 &  35.50 &  35.52 &  35.52 &  35.51 \\ 
& time (s.) &  \revise{0.03}  & 0.03 &  0.25 &  0.88 &  1.16 &  1.16 &  38.22  \\ \hline   
 \multirow{3}{*}{\begin{sideways} road \end{sideways} } 
 & obj (\%) &  \revise{26.57} &  80.85 &  19.73 &  18.78 &  16.82 &  16.82 &  16.82  \\ 
 & spars (\%) & \revise{45.96} & 99.70 &  40.70 &  40.45 &  38.45 &  38.23 &   37.96  \\ 
& time (s.) &  \revise{0.02} & 0.02 &  0.04 &  0.08 &  0.75 &  1.18 &  27.31 \\    \hline     
 \multirow{3}{*}{\begin{sideways} moreno \end{sideways} } 
  & obj (\%) &  \revise{117.75}  & 135.08 &  39.00 &  21.84 &  21.45 &  21.45 &  21.45  \\ 
  & spars (\%) & \revise{81.67} & 99.93 &  40.08 &  29.97 &  29.34 &  29.34 &  29.33   \\ 
 & time (s.) & \revise{0.11} & 0.10 &  0.24 &  2.32 &  5.51 &  5.51 &  253.31  \\ 
    \end{tabular}
    \caption{Experiments on real data with $\epsilon = 0.5$.} 
    \label{tab:resultsreal05}
\end{table}


We observe the following: 
\begin{itemize}

\item In terms of objective function values, $D$ is significantly worse than $GS$, especially when $\epsilon$ is large: the objective function values of $D$ gets  smaller as $\epsilon$ gets smaller, that is, as  $\|\mub - \hat \mub\|_2$ gets smaller, as expected; see~\eqref{eq:boundoptDelta}.  
\revise{$MH$ provides good solutions for the road data set. An explanation is that the support of $G$ is symmetric, as it is a road network. For the moreno data set, whose support is asymmetric (43.55\% of non-zeros of $G$ are not in $G^\top$), it performs poorly. For the email data set, whose support is symmetric as well, it performs better than $D$ but is far from optimality.} 
$S$ provides a solution with objective relatively close to that of $GS$, except for the moreno data set where it is significantly worse (e.g., for $\epsilon = 0.1$, obj of $S$ is 18.29\% while it is 5.60\% for $GS$).  
All $CG(\delta)$ variants perform similarly with slight improvements when $\delta$ decreases. 
As expected, the objectives of $CG(0)$ and $GS$ coincide. 

\item In terms of computational cost, \revise{$MH$ and $D$ are again extremely fast.}  
$CG(0)$ outperforms $GS$, as for synthetic data sets since $G$ is sparse. 
However, $CG(0)$ sometimes requires more time than $CG(10^{-2})$ and $CG(10^{-4})$ for a negligible improvement in the objective (e.g., for the moreno data set with $\epsilon = 0.01$, from 3.98 seconds to 6.41 seconds to reduce the relative objective from 0.79\% to 0.78\%). This is because $CG(0)$ can only stop when it has found an optimal solution.  
The solution $S$ is computed extremely fast (less than 0.25 seconds in all cases) but sometimes produces high objective function values. 
Hence, in practice, using $CG$ with a value of $\delta \in [10^{-2}, 10^{-4}]$  
seems to be a good compromise between the quality of the solution and the run time. Figure~\ref{fig:morenoCG} shows the evolution of the objective function values between the iterations of Algorithm~\ref{alg:CGtsdp} (that computes $CG$). 
\begin{figure}[ht!]
\begin{center}
\begin{tabular}{c}
\includegraphics[width=0.7\textwidth]{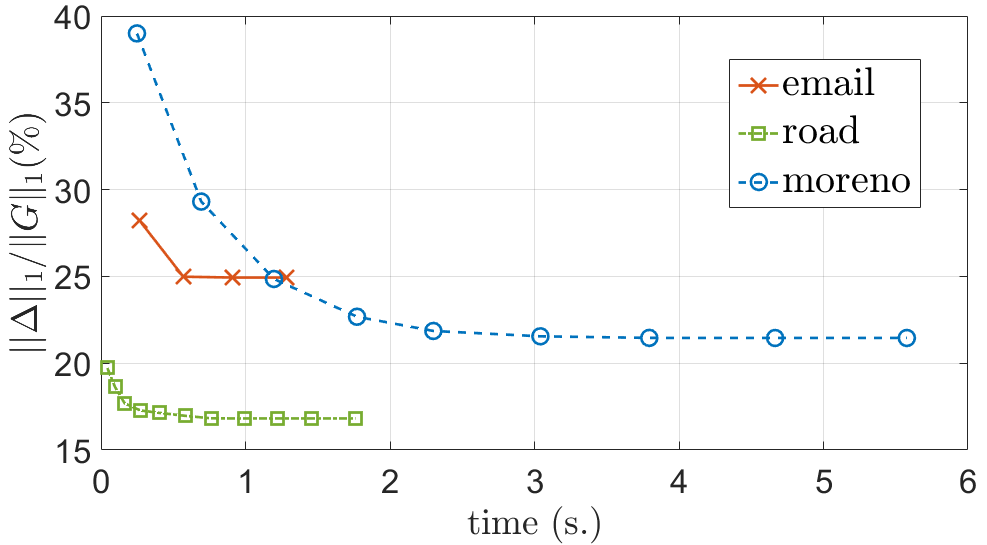} 
\end{tabular}
\caption{Evolution of the relative objective function between each iteration of the column-generation approach (Algorithm~\ref{alg:CGtsdp}) for the three real data sets, for $\epsilon = 0.5$. (Note that the timings are not exactly the same as in Table~\ref{tab:resultsreal05}, this is  because we regenerated the solutions to make this plot.)} 
\label{fig:morenoCG}
\end{center}
\end{figure}
This confirms that a few iterations of Algorithm~\ref{alg:CGtsdp} provides good solutions.  



    \item In terms of sparsity, $D$ has essentially the same sparsity of $G+I$, except for one row; this is expected, see Lemma~\ref{lem:sparsityDal}.  
    \revise{$MH$ has a sparsity close to 50\% for the symmetric data sets, as for the synthetic data sets, and around 80\% for the asymmetric one, moreno.} 
    The solution $S$ is significantly sparser than $G+I$, between 3 and 10 times. 
    The global solution ($GS$ and $CG(0)$) generates surprisingly sparse solutions, slightly sparser than $S$.  
     In quite a few cases, the globally optimal solutions generated by $GS$ and $CG(0)$ do not  have the same sparsity, meaning that the solution of~\eqref{eq:sparseTSDP} is not unique. 



\end{itemize}




\section{Concluding remarks}  \label{sec:conclude}

In this paper we proposed several algorithms for assigning a target stationary distribution $\hat\mub$ to a perturbed 
stochastic matrix $\hat G=G+\Delta$, with a constraint on the support of $\Delta$. 
We first analyzed the special case where $\Delta := \diag(\al)(I_n-G)$ whose support is restricted to the union of the supports of $G$  and the identity, which implies that it does not destroy the sparsity of the original  matrix $G$. We proved several properties of this solution: its optimality for that class of perturbations, 
its sparsity, when it is of rank-one, and when its norm is minimized depending the ordering of $\hat \mub$ compared to $\mub$. 
Unfortunately, numerical experiments show that this proposed solution is in most cases quite far
from global optimality, because the feasible set of perturbations is too constrained, while not being very sparse as its support essentially coincides with that of $G+I$. \revise{It works well however when $\| \mub-\hat \mub \|$ is small enough (see Equation~\eqref{eq:boundDal}, and Table~\ref{tab:resultssyntn1000} when $k=500$), 
and is optimal when $\hat \mub = \frac{1}{1+\lambda}(\mub + \lambda \e_j)$  
for any $j$ and $\lambda$ that is sufficiently large (Theorem~\ref{lem:optimalrankone}).} 
We then proposed an effective linear optimization formulation of the problem when minimizing the component-wise $\ell_1$ norm of $\Delta$ which promotes sparse solutions, as proved in Theorem~\ref{lem:sparsitysol}. To solve this linear optimization problem efficiently, we designed a dedicated column generation approach; see Algorithm~\ref{alg:CGtsdp} which can be stopped before global \revise{optimality} to improve the trade off between solution quality and run time. Algorithm~\ref{alg:CGtsdp} allows us to solve large sparse problems, with and without support constraint and up to global optimality, extremely fast, for sparse matrices up to size $n = 200,000$ in a few minutes. This is because empirically the main computational cost of the method is to construct a large sparse matrix of dimension $2n \times O(\nnz(G))$ with two non-zeros per column.  

A limitation of our column generation approach is that it relies on the support of $G+I$ to provide a first feasible solution. If $G$ is dense, this will make the algorithm not scale as well. 
However, even for dense matrices, the optimal solutions are (often) sparse; see, e.g., the last rows of Table~\ref{tab:resultssyntn1000}. 
Hence it would be interesting to address the following problem: Given $G$ and $\hat \mub$, provide a sparse set $\Omega$ so that the TSDP with support constraints~\eqref{eq:sparseTSDP} is feasible.  
This would not only allow us to initialize Algorithm~\ref{alg:CGtsdp} more efficiently, but also provide sparse solutions to the TSDP. It could be useful even when $G$ is sparse by initializing $\Omega$ with a support sparser than $G+I$, leading to computational gains. 
We conjecture that $\Omega$ needs, generically, at least $2n$ elements~: one per column to satisfy $\hat \mub^\top  \Delta = \hat \mub^\top (I-G)$, and 2 per row to satisfy $\Delta \1_n = 0$.


\revise{
\section*{Acknowledgment} 

We thank Sam Power (University of Bristol) for letting us know of the use of the Metropolis-Hastings algorithm in the context of the TSDP, and for helpful discussions. 

We are also very grateful to the two anonymous reviewers who carefully read the manuscript; their feedback allowed us to improve it significantly. 
}

\appendix

\section{Proof of Theorem~\ref{th:ordering}}  \label{sec:appendixA} 

\begin{lemma} \label{lem:ab}
Let $0<a_- \le a_+$ and $0<b_-\le b_+$, then
$$ \frac{a_-}{b_+} \le \min\left( \frac{a_-}{b_-}, \frac{a_+}{b_+}\right) \le \max\left( \frac{a_-}{b_-}, \frac{a_+}{b_+} \right)  \le  
\frac{a_+}{b_-}, 
$$
which implies that $ \left[\min( \frac{a_-}{b_-}, \frac{a_+}{b_+}) , \max( \frac{a_-}{b_-},\frac{a_+}{b_+} )\right]
\subset  \left[  \frac{a_-}{b_+}, \frac{a_+}{b_-}\right]$. \\
This inclusion is strict if and only if $a_-<a_+$ and $b_-<b_+$.
\end{lemma}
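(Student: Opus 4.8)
The plan is to establish the chain of four inequalities one link at a time, relying only on the elementary monotonicity of division, and then to read off the interval inclusion as an immediate consequence.

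First, the middle inequality $\min\big(\tfrac{a_-}{b_-},\tfrac{a_+}{b_+}\big) \le \max\big(\tfrac{a_-}{b_-},\tfrac{a_+}{b_+}\big)$ holds trivially, which in particular shows the left-hand interval is well defined. For the first inequality, since $0 < b_- \le b_+$ and $a_- > 0$ we have $\tfrac{a_-}{b_+} \le \tfrac{a_-}{b_-}$, and since $0 < a_- \le a_+$ and $b_+ > 0$ we have $\tfrac{a_-}{b_+} \le \tfrac{a_+}{b_+}$; a quantity bounded above by two numbers is bounded above by their minimum, so $\tfrac{a_-}{b_+} \le \min\big(\tfrac{a_-}{b_-},\tfrac{a_+}{b_+}\big)$. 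Symmetrically, $\tfrac{a_+}{b_-} \ge \tfrac{a_-}{b_-}$ and $\tfrac{a_+}{b_-} \ge \tfrac{a_+}{b_+}$, hence $\tfrac{a_+}{b_-} \ge \max\big(\tfrac{a_-}{b_-},\tfrac{a_+}{b_+}\big)$. Since $[\alpha,\beta] \subseteq [\gamma,\delta]$ whenever $\gamma \le \alpha \le \beta \le \delta$, the inclusion $\big[\min(\tfrac{a_-}{b_-},\tfrac{a_+}{b_+}),\max(\tfrac{a_-}{b_-},\tfrac{a_+}{b_+})\big] \subseteq \big[\tfrac{a_-}{b_+},\tfrac{a_+}{b_-}\big]$ follows.

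For the equivalence in the last sentence I would argue both directions. If $a_- < a_+$ and $b_- < b_+$, then both inequalities bounding $\tfrac{a_-}{b_+}$ from above become strict, so $\tfrac{a_-}{b_+} < \min\big(\tfrac{a_-}{b_-},\tfrac{a_+}{b_+}\big)$, which already makes the inclusion proper. Conversely, if $a_- = a_+$, write $a$ for the common value; then $\min\big(\tfrac{a}{b_-},\tfrac{a}{b_+}\big) = \tfrac{a}{b_+} = \tfrac{a_-}{b_+}$ and $\max\big(\tfrac{a}{b_-},\tfrac{a}{b_+}\big) = \tfrac{a}{b_-} = \tfrac{a_+}{b_-}$, so the two intervals coincide and the inclusion is not proper; the case $b_- = b_+$ is identical. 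Thus the inclusion is strict precisely when $a_- < a_+$ and $b_- < b_+$.

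The argument is entirely routine; the only point needing a little care is the bookkeeping for strictness — namely noticing that it suffices to witness strictness at the single endpoint $\tfrac{a_-}{b_+}$ for the ``if'' direction, and that collapsing either of the pairs $a_-,a_+$ or $b_-,b_+$ makes the two intervals literally equal, which gives the ``only if'' direction.
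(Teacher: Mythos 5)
Your proof is correct and follows essentially the same route as the paper: both establish the chain $\tfrac{a_-}{b_+} \le \tfrac{a_-}{b_-}, \tfrac{a_+}{b_+} \le \tfrac{a_+}{b_-}$ by elementary monotonicity (the paper clears denominators and compares products, you invoke monotonicity of division directly, which is the same computation). Your treatment of the strictness equivalence is in fact slightly more explicit than the paper's, which simply asserts both directions as ``easy to see.''
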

\begin{proof}
It follows from the assumptions that 
$$  a_-b_- \le a_-b_+, a_+b_- \le a_+b_+.
$$
Dividing all quantities by the positive product $b_-b_+$ yields
$$  \frac{a_-}{b_+} \le  \frac{a_-}{b_-}, \frac{a_+}{b_+} \le  \frac{a_+}{b_-},
$$
from which the inclusion result follows. It is easy to see that if either $a_-=a_+$ or $b_-=b_+$, then both intervals are equal, and that if $a_-<a_+$ and $b_-<b_+$, then the inclusion of the intervals is strict.
\end{proof}

\begin{lemma} \label{lem:decrease}
Let the elements of the vectors $\mub$, $\hat\mub$ and $\r:=\mub./\hat\mub$ be positive and assume those of $\hat \mub$ are ordered in a non-decreasing way, that is, 
$0 < \hat\mu_1 \le \hat\mu_2 \le \ldots \hat\mu_n$. 
Let the elements of the permuted vector $\tilde \mub:=\tilde P \mub$ also be ordered in a non-decreasing way, that is, $0 < \tilde\mu_1 \le \tilde\mu_2 \le \ldots \tilde\mu_n$. 
Then 
\begin{equation} \label{twobytwo}
\revise{\left[ \min_i \tilde \mu_i/\hat \mu_i , \max_i \tilde \mu_i/\hat \mu_i  \right] \subset
  \left[ \min_i \mu_i/\hat \mu_i , \max_i \mu_i/\hat \mu_i  \right] .}
\end{equation} 
\end{lemma}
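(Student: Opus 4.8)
The plan is to compare the two arrangements through a chain of \emph{adjacent} transpositions, using Lemma~\ref{lem:ab} as the basic two-term estimate. Throughout, $\hat\mub$ is held fixed in non-decreasing order, and for any arrangement of the entries of $\mub$ I write $I(\mub):=\bigl[\min_i \mu_i/\hat\mu_i,\ \max_i \mu_i/\hat\mu_i\bigr]$; the assertion \eqref{twobytwo} is precisely $I(\tilde\mub)\subset I(\mub)$.

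First I would prove the local step: if the current arrangement has $\mu_k>\mu_{k+1}$ for some $k$ (an inversion relative to the non-decreasing order, while $\hat\mu_k\le\hat\mu_{k+1}$ since $\hat\mub$ is sorted), and $\mub'$ denotes the arrangement obtained by swapping $\mu_k$ and $\mu_{k+1}$, then $I(\mub')\subset I(\mub)$. Only positions $k$ and $k+1$ change: the pair $\{\mu_k/\hat\mu_k,\ \mu_{k+1}/\hat\mu_{k+1}\}$ is replaced by $\{\mu_{k+1}/\hat\mu_k,\ \mu_k/\hat\mu_{k+1}\}$. Applying Lemma~\ref{lem:ab} with $a_-=\mu_{k+1}\le a_+=\mu_k$ and $b_-=\hat\mu_k\le b_+=\hat\mu_{k+1}$ gives
\[
\left[\min\!\left(\tfrac{\mu_{k+1}}{\hat\mu_k},\tfrac{\mu_k}{\hat\mu_{k+1}}\right),\ \max\!\left(\tfrac{\mu_{k+1}}{\hat\mu_k},\tfrac{\mu_k}{\hat\mu_{k+1}}\right)\right]\ \subset\ \left[\tfrac{\mu_{k+1}}{\hat\mu_{k+1}},\ \tfrac{\mu_k}{\hat\mu_k}\right],
\]
so both new ratios lie between the two old ones; since every other ratio is unchanged, $\min_i$ cannot decrease and $\max_i$ cannot increase, i.e.\ $I(\mub')\subset I(\mub)$. (If $\mu_k=\mu_{k+1}$ or $\hat\mu_k=\hat\mu_{k+1}$, the multiset of ratios is unchanged and the inclusion is trivial.)

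Then I would run the global argument by bubble sort: starting from the given arrangement of $\mub$, repeatedly pick an adjacent inverted pair and swap it. Each swap lowers the number of inversions by one, so after finitely many steps the arrangement is non-decreasing; any two non-decreasing arrangements of the same multiset give the same ratio at every position (ties in $\mub$ yield equal ratios), so this endpoint realizes $\tilde\mub$ with interval $I(\tilde\mub)$. Chaining the local step along this finite sequence yields $I(\tilde\mub)\subset I(\mub)$, which is \eqref{twobytwo}. I do not anticipate a genuine difficulty; the only points needing care are orienting Lemma~\ref{lem:ab} so that the co-sorted pairing $\{\mu_{k+1}/\hat\mu_k,\mu_k/\hat\mu_{k+1}\}$ is the \emph{inner} interval, and checking that $\tilde\mub$ and $I(\tilde\mub)$ are well defined when $\mub$ has repeated entries.
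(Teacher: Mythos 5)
Your proof is correct and follows essentially the same route as the paper: factor the sorting permutation into pairwise swaps of out-of-order elements and apply Lemma~\ref{lem:ab} at each swap to see that the interval of ratios can only shrink. Your version is a bit more explicit (adjacent transpositions via bubble sort, and the tie-handling remarks), but the decomposition and the key two-term estimate are identical.
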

\begin{proof}
The permutation $\tilde P$ that reorders the elements of the vector $\mub$ to $\tilde \mub := \tilde P\mub$, can be factored in a sequence of reorderings of just two elements that are not ordered in a non-decreasing way. At each such step, the interval \eqref{twobytwo} in which the new ratios lie, can only decrease because of Lemma \ref{lem:ab}.
\end{proof}

\revise{

\section{Rank-one solution for the component-wise $\ell_1$  norm}  \label{app:proofL1}

Let us show that there exists an optimal rank-one  solution to the TSDP for the component-wise $\ell_1$ norm without the nonnegativity constraints. 
This can be done by proving that there exists a rank-one optimal solution to the problem 
\begin{equation} \label{eq:l1sol}
\min_{\Delta} \|\Delta\|_1 
\quad 
\text{ such that } 
\quad 
\hat \mub^\top \Delta = \hat  \mub^\top (I-G), 
\end{equation}
which satisfies the sum-to-one condition, that is, $(G+\Delta) \1_n = \1_n$. 

Let us denote $\mathbf b :=  (I-G)^\top \hat \mub$. The problem~\eqref{eq:l1sol} is separable by columns of $\Delta$, that is, the columns of $\Delta$ can be optimized independently. The subproblem of~\eqref{eq:l1sol} for the $k$th column of $\Delta$ is 
\begin{equation} \label{eq:l1solsub}
\min_{\mathbf z} \| \mathbf z \|_1 
\quad 
\text{ such that } 
\quad 
 \hat \mub^\top \mathbf z = b_k. 
\end{equation} 
We need to find the $\mathbf z$ with smallest $\ell_1$ norm such that $\hat \mub^\top \mathbf z = b_k$. It is easy to convince oneself that $\mathbf z$ with a single non-zero entry corresponding to the largest entry in $\hat \mub$ is optimal. 
An optimal solution of \eqref{eq:l1solsub} is therefore given by 
$\mathbf z^* = \frac{b_k}{\hat \mu_i} \e_i$ for $i \in \argmax_k \hat \mub_k$. 
Therefore, 
\[
\Delta^* 
\; = \; 
\frac{1}{\hat \mu_i} \ \e_i \ \mathbf b^\top  
\; = \; 
\frac{1}{\hat \mu_i} \ \e_i \ \hat \mub^\top (I-G), 
\] 
is a rank-one optimal solution to~\eqref{eq:l1sol}. 
Moreover, $G+\Delta^*$ respects the sum-to-one condition, as $\Delta^* \1_n = 0$, which concludes the proof. 
}


\bibliographystyle{spmpsci}
\bibliography{biblio}

\end{document}